\documentclass[11pt]{article}
\usepackage{amsmath, amsthm, amssymb,latexsym, enumerate}
\usepackage{graphicx,kotex}
\usepackage{hyperref}
\usepackage{xcolor}
\usepackage{multirow}
\usepackage{cite}\usepackage{booktabs}
\usepackage{graphicx, caption, subcaption}
\usepackage{subcaption}
\usepackage{caption}
\usepackage{colortbl}
\usepackage{float}
\usepackage{tikz}
\usepackage[normalem]{ulem}
\usetikzlibrary{positioning}
\usetikzlibrary{decorations.pathreplacing}
\tikzstyle{c} = [draw, every node/.style={circle, draw, inner sep=1.5pt,fill=white}]
\tikzstyle{d} = [draw, every node/.style={circle, draw, inner sep=2pt,fill=white,font=\small}]

\usetikzlibrary{patterns,calc}
\tikzset{super thick/.style= {line width=2.4pt}}

\numberwithin{equation}{section}
\usepackage{array}
\newcolumntype{L}[1]{>{\raggedright\let\newline\\\arraybackslash\hspace{0pt}}m{#1}}
\newcolumntype{C}[1]{>{\centering\let\newline\\\arraybackslash\hspace{0pt}}m{#1}}
\newcolumntype{R}[1]{>{\raggedleft\let\newline\\\arraybackslash\hspace{0pt}}m{#1}}

\definecolor{myblue}{rgb}{0,0.4,0.8}
\definecolor{mygreen}{rgb}{0, 0.8, 0.4}
\definecolor{myred}{rgb}{204, 0, 0}
\definecolor{brown}{rgb}{0.6, 0.4, 0}

\newtheorem{theorem}{Theorem}[section]
\newtheorem{lemma}[theorem]{Lemma}
\newtheorem{corollary}[theorem]{Corollary}

\newtheorem{claim}[theorem]{Claim}

\newtheorem{conjecture}[theorem]{Conjecture}

\newtheorem{observation}[theorem]{Observation}

\counterwithin*{case}{section}

\title{Minimum degree conditions for removable matchings in $k$-connected graphs}

\author{Hojin Chu
\thanks{School of Computational Sciences, Korea Institute for Advanced Study (KIAS), Seoul 02455, Republic of Korea. E-mail: {\tt hojinchu@kias.re.kr}.}
\and Ringi Kim
\thanks{Department of Mathematics, Inha University, Incheon 22212, Republic of Korea. E-mail: {\tt ringikim@inha.ac.kr}.}
\and Boram Park
\thanks{Department of Mathematics Education, Seoul National University, Seoul 08826, Republic of Korea. E-mail: {\tt borampark@snu.ac.kr}.}
}

\date{}

\begin{document}

\maketitle

\begin{abstract}
In 1969, Halin proved that every $k$-connected graph $G$ with minimum degree at least $k+1$ contains an edge $e$ such that $G-e$ is $k$-connected. As an edge is a matching of size one, it is natural to ask whether Halin's result extends to matchings of larger size, a question recently investigated by Li, Zhou, Fujita, and Mao. A matching $M$ of a $k$-connected graph $G$ is called \emph{$k$-removable} if $G-M$ is $k$-connected.  In this paper, we study minimum degree conditions that guarantee the existence of a $k$-removable matching of prescribed size. Specifically, we prove that for all positive integers $k$ and $m$, every $k$-connected graph $G$ with at least $2m$ vertices contains a $k$-removable matching of size $m$ if  \[\delta(G)\ \ge\ \begin{cases} \max\bigl\{k+\bigl\lceil\tfrac m2\bigr\rceil,\ 2m\bigr\} & \text{if } k\ge m,\\[2pt] k+m & \text{if } k<m. \end{cases}\] As a consequence, every $k$-connected graph $G$ with $\delta(G)\ge2k+1$ contains a $k$-removable matching of size $\bigl\lceil(\delta(G)+1)/2\bigr\rceil$, unless $\delta(G)$ is even and $G\cong K_{\delta(G)+1}$.  This verifies a conjecture of Li, Zhou, Fujita, and Mao in the range $\delta(G)\ge2k+1$. Our main tool, of independent interest, is a strengthening of Halin's result producing a $k$-removable edge that avoids a prescribed set of vertices.
\end{abstract}

\section{Introduction}

All graphs in this paper are finite, undirected, and simple. For a graph $G$, we denote its vertex and edge sets by $V(G)$ and $E(G)$, respectively. For a vertex $v\in V(G)$, let $N_G(v)$ and $d_G(v)$ denote the neighborhood and degree of $v$, respectively, and let $\delta(G)$ denote the minimum degree of $G$. For $X\subseteq V(G)$, we write $G-X$ for the graph obtained by deleting the vertices in $X$ and all incident edges, and $G[X]$ for the graph induced by $X$. For $F\subseteq E(G)$, we write $G-F$ for the graph obtained from $G$ by deleting the edges in $F$, and abbreviate $G-\{e\}$ as $G-e$.

An edge $e$ of a $k$-connected graph $G$ is \emph{$k$-removable} if $G-e$ is $k$-connected. In 1969, Halin \cite{halin} determined the minimum degree condition that guarantees the existence of a removable edge.

\begin{theorem}[Halin \cite{halin}]\label{thm:halin}
For every $k\ge 1$, every $k$-connected graph $G$ with $\delta(G)\ge k+1$
contains a $k$-removable edge.
\end{theorem}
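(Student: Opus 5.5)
We argue by contradiction, using the standard "fragment" technique. Suppose $G$ is $k$-connected with $\delta(G)\ge k+1$ and no edge of $G$ is $k$-removable. Fix an edge $e=xy$. Since $G-e$ is not $k$-connected, it has a separator $S$ with $|S|\le k-1$. Because $S$ does not separate $G$ itself, $x$ and $y$ lie in different components of $G-e-S$.

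Put $T=S\cup\{x\}$. Then $|T|\le k$, so $|T|=k$ and $T$ is a minimum separator of $G$. We call $T$ an \emph{$e$-separator}. The component side of $G-T$ that avoids $y$ is an \emph{$e$-fragment} $A$, where we also require $x$ to have no neighbour in $A$ other than through $e$ on the $y$-side. Choose, over all edges $e$ and all such pairs $(T,A)$, a fragment $A$ of minimum size. Let $\bar A=V(G)-T-A$ denote the complement.

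\textbf{Step 1 (the chosen fragment is nonempty).} The vertex $x$ has at least $k+1$ neighbours, and only $y$ lies outside $T\cup A$. We must argue that $A$ is nonempty. If $A$ were empty, then $x$ would have at most $k-1$ neighbours in $S$ plus $y$, giving $d(x)\le k$, a contradiction. So $A\ne\emptyset$.

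\textbf{Step 2 (pass to an edge inside the fragment).} Pick a vertex $a\in A$ and an edge $f=ab$ with $b\in A\cup T$. Such an edge exists because $d(a)\ge k+1>|T|$, so $a$ has a neighbour in $A$ or in $T$. Apply the hypothesis to $f$ to get an $f$-separator $T'$ with fragment $A'$.

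\textbf{Step 3 (uncrossing).} Apply the usual submodularity of neighbourhoods, $|N(X\cap Y)|+|N(X\cup Y)|\le |N(X)|+|N(Y)|$ with corrected terms for the removed edge. Using $k$-connectivity, this should show that $A\cap A'$, or $A\cap \bar A'$, is a strictly smaller fragment attached to some edge. That contradicts the minimality of $A$.

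This crossing analysis is the main obstacle. We must check that the relevant intersection is nonempty, choosing sides so that $a$ lies in it, and that it is a genuine fragment for some non-removable edge, with the deleted edge correctly accounted for. The degree bound $k+1$ is what rules out the degenerate case where an intersection is empty but its "boundary" count would be too small.

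Alternatively, and more simply, I would follow Mader's classical approach. Consider a $k$-connected graph in which every edge is critical, meaning $G-e$ is not $k$-connected. Mader's theorem says that every cycle in such a graph contains a vertex of degree exactly $k$. Since $\delta(G)\ge k+1$, $G$ would have to be acyclic. But $\delta(G)\ge 2$ forces a cycle, which is a contradiction.

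If citing Mader is allowed, the proof is two lines. Otherwise the fragment argument above, which is essentially Mader's proof restricted to one cycle, is the plan.
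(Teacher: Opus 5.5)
\textbf{Verdict: genuine gap.} Your self-contained route does not prove the statement. Step~3 is the entire content of the proof, and you only assert that the crossing analysis ``should'' produce a smaller fragment.

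There is also a slip in the set-up. Since $N(x)\subseteq A\cup S\cup\{y\}$ and $d(x)\ge k+1$, $x$ \emph{must} have a neighbour in $A$; the condition you want is $N(x)\cap\bar A=\{y\}$.

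The difficulty you leave open is real. Submodularity only tells you that some corner has a neighbourhood of size $k$. Your minimum, however, is taken over fragments \emph{attached to a non-removable edge}, and nothing in your sketch supplies such an edge for the corner. With an arbitrary edge $ab$ inside $A$, you also give no mechanism for the case where the corner that submodularity makes small is not the one you need.

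Your fallback via Mader is a valid deduction, since $\delta(G)\ge k+1\ge 2$ forces a cycle. But Mader's cycle theorem is a strict strengthening of the statement, so invoking it supplies none of the argument.

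The paper obtains the statement as the case $W=\emptyset$ of Theorem~\ref{lem:key} (note $h(k,0)=k+1$), and that proof contains exactly the two ingredients you are missing.
\begin{enumerate}[(1)]
\item It minimizes $|A|$ over \emph{all} witnesses $(u,v,T,A,B)$ of all edges, in both orientations, with the endpoints included in the sides.
\item If $|A|\ge 2$, connectivity of $A$ gives a neighbour $u'\in A$ of $u$. One then uncrosses against a witness $(u',u,T',A',B')$ of the edge $u'u$, which \emph{shares the endpoint} $u$ with $uv$.
\end{enumerate}
Lemma~\ref{lem:uncrossing} then settles every case.
\begin{itemize}
\item If $|Q_{21}|\le k-2$, then $d(u)\le k$.
\item Otherwise $B\cap A'=\emptyset$. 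If moreover $|Q_{11}|\ge k$, this forces $|B|<|A|$, contradicting minimality via the reversed witness $(v,u,T,B,A)$.
\item If $|Q_{11}|\le k-1$, then every edge leaving $A\cap A'$ other than $u'u$ ends in $Q_{11}$. So $Q_{11}$ separates $u'$ from $u$ in $G-u'u$. This gives a witness for $u'u$ whose side lies in $A\cap A'\subsetneq A$, since $u\in B'$, again contradicting minimality.
\end{itemize}
Hence $A=\{u\}$ and $d(u)\le |T|+1=k$, a contradiction.

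The shared endpoint is what makes the small corner automatically carry a non-removable edge. The two-sided minimization disposes of the other corner.
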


The bound in Theorem~\ref{thm:halin} is sharp, since no edge of a $k$-regular $k$-connected graph is $k$-removable. 
This result initiated an extensive line of research on \emph{connectivity-keeping subgraphs}, in which one seeks a prescribed subgraph whose vertices or edges can be deleted while preserving $k$-connectedness.

The vertex-deletion version was initiated by Chartrand, Kaugars, and Lick \cite{ckl} and was subsequently developed, most notably by Mader \cite{maderpath,madertree}.
Since then, it has been extensively studied from several perspectives, including prescribed subgraphs such as paths, stars, caterpillars, spiders, and trees, special classes of host graphs such as bipartite and triangle-free graphs, and general degree conditions (see, for example, \cite{fk,ho,lyh,cfp,hl} and the survey \cite{tmsurvey}). 
Extending the edge-deletion perspective from a single edge to a subtree, Hasunuma \cite{hasunuma} posed the following conjecture.

\begin{conjecture}[Hasunuma~\cite{hasunuma}]\label{conj:hasunuma}
For integers $k,n\ge 1$, let $T$ be any tree of order $n$. If $G$ is a
$k$-connected graph with $\delta(G)\ge k+n-1$, then $G$ contains a copy $T'$ of
$T$ such that $G-E(T')$ remains $k$-connected.
\end{conjecture} 

Hasunuma proved the conjecture for $k\le2$ and also established it when the prescribed tree is a path. The case $k=3$ was proved independently by Liu, Liu, and Hong~\cite{llh} and by Yang and Tian~\cite{yt}.

\begin{theorem}[Hasunuma~\cite{hasunuma}]\label{thm:pathbound}
For integers $k,n\ge 1$, every $k$-connected graph $G$ with $\delta(G)\ge k+n-1$
contains a path $P$ of order $n$ such that $G-E(P)$ is $k$-connected.
\end{theorem}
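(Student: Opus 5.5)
We argue by induction on $n$, building the path one edge at a time and applying Theorem~\ref{thm:halin} at each step to a suitable auxiliary graph. For $n=1$ there is nothing to prove, and for $n=2$ the statement is exactly Theorem~\ref{thm:halin}, since $\delta(G)\ge k+1$. We will in fact prove a stronger statement that is better suited to induction: for every $k$-connected $G$ with $\delta(G)\ge k+n-1$ there is a path $P=v_1v_2\cdots v_n$ such that $G-E(P)$ is $k$-connected. The extension will take place at the endpoint $v_n$, so the key point is that $v_n$ must retain large degree.

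\textbf{Setup of the induction step.} Suppose $\delta(G)\ge k+n$. The graph $G$ also satisfies $\delta(G)\ge k+n-1$, so induction gives a path $P=v_1\cdots v_n$ with $H:=G-E(P)$ being $k$-connected. In $H$, each internal vertex of $P$ has lost two edges, and $v_n$ has lost exactly one, so $d_H(v_n)\ge k+n-1\ge k+1$. We then want a neighbor $u\notin V(P)$ of $v_n$ in $H$ such that $H-v_nu$ is still $k$-connected; appending $v_nu$ to $P$ completes the step. Since $v_n$ has at least $k+n-1$ neighbors in $H$ and at most $n-1$ of them lie on $P$, at least $k$ neighbors lie outside $P$. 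The difficulty is that Theorem~\ref{thm:halin} only produces \emph{some} removable edge, not one at a prescribed vertex.

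\textbf{The main obstacle: a localized version of Halin's theorem.} We need the following local statement: if $H$ is $k$-connected and $x$ is a vertex with $d_H(x)\ge k+1$, then $x$ has an incident edge $xu$ with $H-xu$ $k$-connected, with $u$ avoiding a prescribed small set. We plan to prove it via Mader-type structure of $k$-critical edges. Suppose every edge $xu$ with $u\in N_H(x)\setminus V(P)$ is non-removable. Then for each such $u$ there is a separation $(A_u,S_u,B_u)$ of $H-xu$ with $|S_u|=k-1$, $x\in A_u$, and $u\in B_u$. Choose $u$ so that $B_u$ is minimal. Because $\delta(H-E(P))$ is large, $B_u$ contains many vertices; in particular, $u$ has at least $k+n-2$ neighbors inside $B_u\cup S_u$. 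Now take an edge $xw$ with $w\in B_u\setminus V(P)$, or argue that some such $w$ exists, using the fact that $x$ has at least $k$ neighbors off $P$ while $|S_u|=k-1$. The standard crossing argument, comparing the separations for $u$ and $w$, then yields a separation $B_w\subsetneq B_u$, which contradicts minimality.

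If this local argument fails to go through cleanly, the fallback is to apply Theorem~\ref{thm:halin} to an auxiliary graph. One option is to take $H$ with the vertices of $P-v_n$ suppressed. Another is to add a $K_{k}$ gadget whose purpose is to raise the degrees of all vertices other than $v_n$ to exactly $k$, so that every removable edge is forced to be incident to $v_n$. In that variant, the delicate point is checking that the gadget preserves $k$-connectivity and that removability in the auxiliary graph transfers back to $H$. Either way, the crux is forcing the removable edge to be incident to the endpoint $v_n$ and to avoid $V(P)$.
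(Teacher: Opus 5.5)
\paragraph{There is a genuine gap: the induction step is false as you formulate it.} The paper does not prove Theorem~\ref{thm:pathbound}; it only quotes it from Hasunuma. So your argument has to stand on its own.

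Your ``stronger statement'' is word for word the original one. The induction hypothesis therefore hands you an \emph{arbitrary} path $P=v_1\cdots v_n$ with $G-E(P)$ $k$-connected, and such a path need not be extendable at either end. Here is an example with $k=1$ and $n=2$:
\begin{itemize}
\item Take disjoint copies $D,C_1,C_2$ of $K_4$, vertices $a,b\in D$ and $c_i\in C_i$, and two new vertices $v_1,v_2$ with edges $v_1v_2,v_1a,v_1c_1,v_2b,v_2c_2$.
\item Then $\delta(G)=3=k+n$, and $H=G-v_1v_2$ is connected, so $P=v_1v_2$ is a legitimate output of your hypothesis.
\item But $H$ is a chain of three $K_4$'s linked through $v_1$ and $v_2$ by bridges. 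Every edge of $H$ at $v_1$ or at $v_2$ is a bridge, so $P$ has no good extension at either end.
\item Yet $G$ does have a good path of order $3$, inside $C_1-c_1$.
\end{itemize}

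The local lemma you want is also false for every $k$. Join a vertex $x$ by single edges to $k+1$ large cliques that are all completely joined to a common set $S$ of $k-1$ vertices. This graph is $k$-connected and all its degrees are at least $k+1$. Yet every edge at $x$ is non-$k$-removable, with $S$ as the cut. So no auxiliary-graph device can rescue the step: it cannot create a removable edge at $v_n$ where none exists. Separately, a gadget cannot bring degrees that already exceed $k$ down to exactly $k$.

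\paragraph{The crossing sketch also breaks at a concrete point.} In the separation $(A_u,S_u,B_u)$ of $H-xu$, the edge $xu$ is the only $A_u$--$B_u$ edge. Hence $N_H(x)\subseteq A_u\cup S_u\cup\{u\}$, and there is no edge $xw$ with $w\in B_u\setminus\{u\}$. Comparing $x$'s $k$ neighbours off $P$ with $|S_u|=k-1$ only yields a neighbour in $A_u\cup\{u\}$.

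\paragraph{What is missing.} You need a mechanism that controls \emph{which} path gets extended. Possible sources are:
\begin{itemize}
\item a genuinely stronger inductive statement that constrains the end of the path or the region where it lives;
\item a global extremal choice, for example of the path together with a witness whose small side is minimal, in the spirit of the proof of Theorem~\ref{lem:key};
\item a structural tool such as Mader's theorem that, in a $k$-connected graph, every cycle of non-$k$-removable edges contains a vertex of degree $k$.
\end{itemize}
Greedily extending whatever good path the induction provides cannot work.
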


 A matching $M$ in a $k$-connected graph $G$ is \emph{$k$-removable} if $G-M$ is $k$-connected. 
For an integer $\ell\ge1$, an \emph{$\ell$-matching} is a matching of size $\ell$.
In this paper, we study minimum degree conditions guaranteeing the existence of a $k$-removable $m$-matching for positive integers $k$ and $m$. 
This problem extends Theorem~\ref{thm:halin} from the deletion of a single edge to the deletion of $m$ pairwise disjoint edges.
A general sufficient condition follows from Theorem~\ref{thm:pathbound}. 
Since a path on $2m$ vertices contains an $m$-matching, Theorem~\ref{thm:pathbound} implies that for all $k,m\ge 1$, every $k$-connected graph $G$ with $\delta(G)\ge k+2m-1$ contains a $k$-removable $m$-matching. 
For $m=2$, Li, Zhou, Fujita, and Mao \cite{lzfm} recently obtained a  stronger result.

\begin{theorem}[Li, Zhou, Fujita, and Mao~\cite{lzfm}]\label{thm:lzfm}
For every $k\ge 1$, every $k$-connected graph $G$ with $\delta(G)\ge k+1$
contains a $k$-removable $2$-matching, unless $k=1$ and $G$ is a cycle.
\end{theorem}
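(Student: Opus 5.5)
The plan is to first take a single $k$-removable edge using Theorem~\ref{thm:halin}, and then find a second $k$-removable edge of the resulting graph that avoids the first one's endpoints. So let $e=xy$ be a $k$-removable edge of $G$, which exists since $\delta(G)\ge k+1$, and set $G'=G-e$. Then $G'$ is $k$-connected. Every vertex of $G'$ other than $x,y$ still has degree at least $k+1$, while $x$ and $y$ may have degree only $k$. It therefore suffices to find a $k$-removable edge $f=uv$ of $G'$ with $\{u,v\}\cap\{x,y\}=\emptyset$: then $\{e,f\}$ is a $2$-matching and $G-\{e,f\}=G'-f$ is $k$-connected.

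The core is a vertex-avoiding version of Halin's theorem, proved by the classical fragment argument. I would state it as follows: if $H$ is $k$-connected, $S\subseteq V(H)$ with $|S|\le 2$, and $d_H(w)\ge k+1$ for all $w\notin S$, then $H$ has a $k$-removable edge with no end in $S$, apart from a short list of small exceptions.

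To prove it, suppose every edge of $H-S$ is non-removable. For each such edge $uv$, the graph $H-uv$ has a separator $T$ with $|T|=k-1$ such that $u$ and $v$ lie in different components. Call the side $A\ni u$ of $(H-uv)-T$ a fragment; note that $T\cup\{u\}$ is a $k$-separator of $H$. Among all such fragments over all edges of $H-S$, choose $A$ with $|A|$ minimal.
\begin{itemize}
\item If $|A|=1$, then $A=\{u\}$ and $d_H(u)\le |T|+1=k$. This forces $u\in S$, contradicting $u\notin S$.
\item If $|A|\ge 2$, then $A$ contains a vertex $w\ne u$. Since $|A|\ge 2$ and $d(w)\ge k+1$, $A$ contains an edge, and I would want an edge $wz$ of $H-S$ with $w\in A$. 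Apply the hypothesis to $wz$ to get a second fragment $A'$. The standard crossing-separator counting ($|T\cup\{u\}|=|T'\cup\{w\}|=k$, with $k$-connectivity bounding the corner sizes) then shows that one corner $A\cap A'$, or a set obtained from it, is a strictly smaller fragment of an edge avoiding $S$. This contradicts minimality.
\end{itemize}

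The main obstacle is making this last step respect $S$. Two things must be controlled. First, the edge inside $A$ must have both ends outside $S$. Since $|S|\le 2$, I would handle this by counting degrees: a vertex $w\in A\setminus S$ has at least $k+1$ neighbours, of which at most $k-1$ lie in $T\cup\{u\}\setminus\{u\}$ and at most $2$ in $S$. If that count fails, I would choose $A$ minimal among fragments with $|A\setminus S|\ge 2$ and treat fragments inside $S\cup\{u\}$ separately. Second, the smaller corner produced by crossing must again contain a non-$S$ vertex of degree at least $k+1$. I expect this to need a case analysis on how $x,y$ meet $A$, $T$ and $A'$, and this is exactly where the exceptions arise. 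For $k=1$ the failure is that every edge of $G'$ is a bridge, i.e.\ $G'$ is a path and $G$ is a cycle, which is the excluded case.

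Finally, if the lemma's exceptional configuration occurs for the particular choice of $e$, I would exploit the freedom in choosing $e$. Halin's proof in fact yields several removable edges, and I would pick a different removable $e$ whose endpoints do not create the bad small fragment. I would check that this is always possible except when $k=1$ and $G$ is a cycle.
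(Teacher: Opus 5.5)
Your reduction is sound: if $e=xy$ is $k$-removable (Theorem~\ref{thm:halin}) and $f$ is a $k$-removable edge of $G-e$ missing $x$ and $y$, then $\{e,f\}$ works. The paper does not prove this statement; it only quotes it from Li, Zhou, Fujita, and Mao. Its Theorem~\ref{lem:key} is exactly the vertex-avoiding lemma you need, and with $W=\{x,y\}$ it has $h(k,2)=k+1$. So your route, completed by that theorem, gives the statement for $k\ge 3$.

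The gap is that your proposal does not prove the lemma. The decisive step, that crossing yields a strictly smaller fragment of an edge avoiding $S$, is only asserted. Your plan also stalls on exactly the case you defer. The minimal fragment may satisfy $A\setminus S=\{u\}$, with the surplus neighbours of $u$ in $A\cap S$ and $T\setminus S$. Then there is no edge of $H-S$ inside $A$ to cross with, and counting gives only $d_H(u)\le |A\cap S|+k$, which does not contradict $d_H(u)\ge k+1$. Minimizing instead over fragments with $|A\setminus S|\ge 2$ does not help, because a crossing corner can fall outside that class.

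You are missing two ideas from the paper. First, minimize $|A|$ only over \emph{good} witnesses, whose $u$-side contains at most $\lfloor |S|/2\rfloor$ vertices of $S$; one orientation of every witness qualifies. Second, cross the witness of $uv$ with a witness of $u'u$, where $u'\in (A\cup T)\setminus S$ is a neighbour of the \emph{same} endpoint $u$, using Lemma~\ref{lem:uncrossing}. This shows that $u$ has no neighbour in $(A\cup T)\setminus S$ (Claim~\ref{clm:no_neighbor_ATU}), so $N_H(u)\subseteq (A\cap S)\cup (T\cap S)\cup\{v\}$ and $d_H(u)\le \min\{k+1,3\}$.

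That bound contradicts $d_H(u)\ge k+1$ only when $k\ge 3$. This is why Theorem~\ref{thm:main:1} with $m=2$ needs $\delta(G)\ge 4$ when $k=2$ and $\delta(G)\ge 3$ when $k=1$.

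For $k=2$, the configuration $N_H(u)=\{v,s_1,s_2\}$ with $A\cap S=\{s_1\}$ and $T=\{s_2\}$ survives every count. Handling it needs a further argument, for instance using the $2$-separation $\{s_1,s_2\}$ (which cuts $A\setminus\{u,s_1\}$ off from the rest) through a $2$-sum reduction and induction. Nothing in your outline supplies this.

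For $k=1$, your identification of the exceptions is wrong. Take $G=K_{2,n}+xy$ with $n\ge 2$ and $\{x,y\}$ the part of size $2$ (for $n=2$ this is $K_4$ minus an edge), and $e=xy$. Then $G-e=K_{2,n}$ has no bridges, yet every edge meets $\{x,y\}$. So the exceptions are not a short list of small graphs, and your final step, ``choose a different $e$'', carries real weight but is not argued.

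As written, the proposal is a plausible plan for $k\ge 3$ whose main lemma is unproved, and it contains no argument for $k\le 2$.
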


They also established a result guaranteeing a removable matching whose size grows with the minimum degree.

\begin{theorem}[Li, Zhou, Fujita, and Mao~\cite{lzfm}]\label{thm:lzfm_delta}
For every $k\ge 4$, every $k$-connected graph $G$ with $\delta(G)\ge 3k-1$ contains a $k$-removable $\bigl\lceil(\delta(G)+1)/2\bigr\rceil$-matching, unless $G$ is the complete graph of odd order.
\end{theorem}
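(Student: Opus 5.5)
We prove the statement by choosing the matching greedily, one edge at a time, and adding each new edge only if it keeps the current graph $k$-connected. Put $\delta=\delta(G)\ge 3k-1$, $k\ge 4$ and $m=\lceil(\delta+1)/2\rceil$. Assume we have a matching $M_i$ of size $i<m$ such that $G_i=G-M_i$ is $k$-connected. Let $S_i=V(M_i)$, so $|S_i|=2i$. Every vertex outside $S_i$ still has degree at least $\delta$ in $G_i$. Every vertex of $S_i$ has degree at least $\delta-1\ge k+1$. The goal is a $k$-removable edge $e$ of $G_i$ with both ends outside $S_i$; then $M_{i+1}=M_i\cup\{e\}$ continues the process.

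The main tool is a strengthening of Theorem~\ref{thm:halin}. Let $H$ be $k$-connected and let $S\subseteq V(H)$ be a prescribed set. Suppose every vertex of $V(H)\setminus S$ has degree at least $k+1$, and $V(H)\setminus S$ is large in a suitable sense. Then $H$ has a $k$-removable edge with both ends outside $S$. To prove this, we would follow Halin's argument through minimal separating structures. If an edge $xy$ is not $k$-removable, then there is a vertex set $T$ with $|T|=k-1$ such that $H-T-xy$ is disconnected with $x$ and $y$ on different sides. Among all non-removable edges inside $V(H)\setminus S$, take such a pair $(T,A)$ where the side $A$ is smallest. Then, as in Mader's "end"/fragment arguments, show that $A$ contains a vertex outside $S$ that has many neighbours outside $S\cup T$. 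An edge at this vertex leads to a smaller fragment, which is a contradiction.

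The degree bound $\delta\ge 3k-1$ ensures that the fragment side cannot lie mostly inside $S_i$. A vertex $v\in A\setminus S_i$ has at least $\delta$ neighbours. At most $k-1$ of them lie in $T$ and at most one lies across the edge $xy$. Hence at least $\delta-k$ of them lie in $A$, and $|A|\ge \delta-k+1\ge 2k$.

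The second part is counting, to confirm that an edge disjoint from $S_i$ exists and is valid. We need $G-S_i$ to contain an edge as long as $2i<\delta+1$. A vertex $v\notin S_i$ has at least $\delta-2i\ge 1$ neighbours outside $S_i$ when $i\le(\delta-1)/2$. This holds for all $i<m$, except in the boundary case where $\delta$ is even and $i=\delta/2$. In that boundary case $m=\delta/2+1$, and the count can fail only if every vertex outside $S_i$ is adjacent to all of $S_i$ and to nothing else. Then either $G=K_{\delta+1}$, which is the excluded complete graph of odd order, or we can repair the matching by an exchange step. The exchange replaces an edge $ab\in M_i$ with two edges $au,bw$ where $u,w\notin S_i$; we then re-check removability using the lemma with $S=S_i\setminus\{a,b\}$.

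The main obstacle is the avoidance lemma itself. The fragment argument has to handle vertices of $S$ that have low degree in $G_i$. It also has to show that the minimal fragment cannot be contained in $S\cup T$. Here $|S|\le\delta-1$ must be compared with the guaranteed fragment size $\delta-k+1$ together with the $k-1$ vertices of $T$. This comparison is exactly where the hypotheses $3k-1$ and $k\ge4$ should be used.
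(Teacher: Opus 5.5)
Your skeleton matches the paper's: extend a $k$-removable matching, each time finding a $k$-removable edge of $G_i$ avoiding $S_i=V(M_i)$, with an exchange when $G-S_i$ is edgeless. The paper gets this statement from Corollary~\ref{cor1}, which needs only $\delta(G)\ge 2k+1$. However, there is a genuine gap: the avoidance lemma, which you rightly call the main obstacle, is not proved, and the route you sketch fails at a specific point.

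\textbf{Missing balancing step.} If $A$ is the smallest side among \emph{all} witnesses of non-removable edges inside $V(H)\setminus S$, nothing bounds $|A\cap S|$. Your count gives the end $x\in A$ at least $\delta-k$ neighbours in $A$, but $|S_i|=2i$ can be as large as $\delta$, so all of them may lie in $S$. Comparing with the $k-1$ vertices of $T$ does not help, since nothing forces $T\subseteq S$. Because $\delta-k\le\delta-1$ for every $k\ge1$, no hypothesis such as $\delta\ge 3k-1$ or $k\ge 4$ closes this. What is missing is to balance before minimizing. Since $|X\cap S|+|Y\cap S|\le|S|$, every witness $(x,y,T,X,Y)$ has a side meeting $S$ in at most $\lfloor |S|/2\rfloor$ vertices. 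Take $(u,v,T,A,B)$ with $|A|$ minimum among witnesses whose side $A$ has this property. Then $u$ must have a neighbour $u'\in A\setminus S$, since otherwise $d(u)\le |A\cap S|+|T|+1\le k+\lfloor\delta/2\rfloor<\delta$ once $\delta\ge 2k+1$.

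\textbf{Unjustified minimality step.} The sentence ``an edge at this vertex leads to a smaller fragment'' is where the real work lies, and it is not automatic. The witness $(u',u,T',A',B')$ of $u'u$ crosses $(T,A)$, and you must produce a fragment that is both strictly smaller than $A$ \emph{and} again balanced. The paper does this with the uncrossing Lemma~\ref{lem:uncrossing}, in four cases:
\begin{enumerate}
\item $|Q_{21}|\le k-2$ forces $d(u)\le k$, a contradiction.
\item Otherwise $|Q_{12}|\le k-1$, which gives $B\cap A'=\emptyset$.
\item If $|Q_{11}|\le k-1$, the separator $Q_{11}$ gives a witness for $u'u$ whose $u'$-side lies in $A\cap A'\subseteq A\setminus\{u\}$. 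That side is automatically balanced and smaller than $A$.
\item If $|Q_{11}|\ge k$, then also $B\cap B'=\emptyset$ and $|A|>|B|$. By minimality $B$ is unbalanced, so $|S\cap(A\cup T)|\le\lfloor|S|/2\rfloor$. Hence \emph{both} $A'$ and $B'$, which lie in $A\cup T$, are balanced, and one of them is smaller than $A$.
\end{enumerate}
Without this case analysis, and in particular without checking that the new side stays balanced, the minimality contradiction does not close.

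\textbf{Unsupported exchange step.} Your boundary exchange, replacing $ab$ by $au,bw$, is only asserted. The avoidance lemma yields \emph{some} removable edge avoiding a set; it does not certify that $(M_i\setminus\{ab\})\cup\{au,bw\}$ is $k$-removable. You need a direct argument, as in Claim~\ref{clm:good}:
\begin{itemize}
\item show $G_i-u$ is $k$-connected by a degree count on any $k$-cut through $u$;
\item reattach $u$ via Observation~\ref{lem:k_con_preserv};
\item swap one matching edge for another, keeping the size fixed, so that $G-V(M')$ gains an edge and the lemma applies again.
\end{itemize}
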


Motivated by these results, Li, Zhou, Fujita, and Mao introduced the parameter $f(k,\delta)$, defined as the largest integer $m$ such that every $k$-connected graph $G$ with $|V(G)|\ge 2\delta$ and $\delta(G)\ge \delta$ has a $k$-removable $m$-matching. 
Theorem~\ref{thm:lzfm_delta} states that $\bigl\lceil(\delta+1)/2\bigr\rceil\le f(k,\delta)$ for $\delta\ge 3k-1$ and $k\ge 4$. They further conjectured that the same lower bound holds under the optimal minimum degree assumption, except when $\delta(G)$ is even and $G \cong K_{\delta(G)+1}$.

\begin{conjecture}[Li, Zhou, Fujita, and Mao~\cite{lzfm}]\label{conj:lzfm}
For $k\ge 1$, every $k$-connected graph $G$ with $\delta(G)\ge k+1$ contains a
$k$-removable $\bigl\lceil(\delta(G)+1)/2\bigr\rceil$-matching, unless
$\delta(G)$ is even and $G\cong K_{\delta(G)+1}$.
\end{conjecture}
We note that Conjecture~\ref{conj:lzfm} requires a minor correction since every cycle is a counterexample to the conjecture, as noted in Theorem~\ref{thm:lzfm}.

Since every edge of a $k$-removable matching decreases the degree of each of its ends by one, the condition $\delta(G)\ge k+1$ is necessary. Although Conjecture~\ref{conj:lzfm} remains open, our main result substantially improves the condition $\delta(G)\ge k+2m-1$ obtained from Theorem~\ref{thm:pathbound}.

\begin{theorem}\label{thm:main:1}
For integers $k, m\ge 1$,  every $k$-connected graph $G$  with at least $2m$ vertices contains a $k$-removable $m$-matching if \[
   \delta(G)\ \ge\
   \begin{cases}
      \max\bigl\{k+\bigl\lceil\tfrac m2\bigr\rceil,\ 2m\bigr\}
         & \text{if } k\ge m,\\[2pt]
      k+m & \text{if } k<m.
   \end{cases}\]
\end{theorem}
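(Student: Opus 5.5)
The plan is to build the matching one edge at a time, using the strengthened Halin-type lemma announced in the abstract. I would first prove the following lemma. \emph{If $G$ is $k$-connected, $S\subseteq V(G)$, and every vertex outside $S$ has degree at least $k+1$, then (under a suitable size condition, such as $|S|$ small relative to $\delta(G)-k$) $G$ has a $k$-removable edge with both ends outside $S$.}

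The proof of the lemma would follow Halin's original argument. Suppose every edge $uv$ with $u,v\notin S$ is non-removable. Then $G-uv$ has a separator $T$ of size $k-1$, and $u,v$ lie in different components of $(G-uv)-T$. Now take an extremal configuration: an edge $uv$ together with such a $T$ for which one side (a fragment $A$ containing $u$ but not $v$) is minimal. Since $d(u)\ge k+1$ and $|N(u)\cap A|\ge |N(u)|-(k-1)-1\ge 1$, the fragment $A$ contains further vertices. The standard crossing/uncrossing argument on fragments should then yield an edge inside $A$ whose separator produces a strictly smaller fragment. The new input is to show that this edge can be chosen with both ends outside $S$. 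For this I would use the degree surplus: $A$ contains many vertices adjacent to $u$, and not all of them can be in $S$ once $|S|$ is small compared with $\delta-k$.

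With the lemma in hand, the main theorem follows by iteration. Suppose we have already found a $k$-removable matching $M_i$ of size $i<m$, so $G_i=G-M_i$ is $k$-connected. Let $S=V(M_i)$, so $|S|=2i$. In $G_i$, every vertex outside $S$ still has degree at least $\delta(G)\ge k+1$. Applying the lemma to $G_i$ with this $S$ gives an edge $e$ disjoint from $M_i$ with $G_i-e$ $k$-connected, and we set $M_{i+1}=M_i\cup\{e\}$.

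The two regimes of the hypothesis should correspond to two ways of paying for avoiding $S$. When $k<m$, the bound $\delta\ge k+m$ suggests a version of the lemma in which avoiding a set of $2i$ vertices costs about $i$ in the degree surplus. When $k\ge m$, the bound $\max\{k+\lceil m/2\rceil,2m\}$ suggests two separate mechanisms. First, $\delta\ge 2m$ ensures every vertex has a neighbour outside $S$ and that a matching of size $m$ is not blocked by counting. Second, the surplus $\lceil m/2\rceil$ is used because small separators of size at most $k-1$ can only interact with a limited part of $S$. To exploit this, I would choose each new edge cleverly, for example by adding edges in pairs or picking $e$ inside a minimal fragment, so that each unit of degree surplus handles two matched edges.

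The main obstacle is the avoiding lemma with the sharp dependence on $|S|$. The danger is that the minimal fragment $A$ lies almost entirely inside $S$, or is small with all internal edges touching $S$. I would first handle this by showing that a minimal fragment has size at least $\delta-k+1$. That lower bound would force $A\setminus S\neq\emptyset$ once $|S|\le 2(\delta-k)$ or a comparable bound holds, and I would then try to locate the required edge inside $A\setminus S$ using the degree condition. Tuning the constant in this step to match exactly $k+\lceil m/2\rceil$ versus $k+m$ is where I expect most of the work.
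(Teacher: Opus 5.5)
\paragraph{Gap 1: the iteration can get stuck.}
Your iteration step has a genuine gap that no avoiding lemma can repair. If $G_i-S$ is edgeless, there is no edge with both ends outside $S$ at all. For $k\ge m-1$ this cannot happen, since $\delta>2m-2\ge |S|$. For $k\le m-2$ it can happen, because $\delta\ge k+m$ may be at most $|S|=2i$.

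For example, let $k=2$ and $m=4$. Let $X$ be a set of $6$ vertices spanning a perfect matching $M_3$, and join every vertex of $X$ to every vertex of an independent set $Y$ of size $6$. Then $\delta(G)=6=k+m$, $|V(G)|=12$, and $G-M_3=K_{6,6}$ is $2$-connected with $|S|=6\le 2(\delta-k)-2$. Yet $G-S=G[Y]$ has no edges. So your process stops at size $3$, and your lemma as formulated fails for $G_3$ and $S$, even though $G$ has a $2$-removable $4$-matching.

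The paper handles this by taking $M$ to be a \emph{maximum} $k$-removable matching and proving an exchange claim (Claim~\ref{clm:good}). Suppose $U=V(G)\setminus V(M)$ is independent. A degree count gives $xy\in M$ and $u,v\in U$ with $ux,yv\in E(G)$. One then shows that $H-u$ is $k$-connected for $H=G-M$: a $k$-cut through $u$ would leave a component with at most $m-1$ vertices of $V(M)$, and its vertices would have too small degree. Hence $(M\setminus\{xy\})\cup\{ux\}$ is again $k$-removable and leaves the edge $yv$ outside its vertex set. The avoiding lemma is then applied under the explicit hypothesis $E(H-V(M))\neq\emptyset$.

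\paragraph{Gap 2: the avoiding lemma is not proved.}
This lemma is the heart of the theorem, and the route you sketch does not reach the stated thresholds.

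Suppose you minimize the fragment $A\ni u$ over \emph{all} witnesses of edges outside $S$. Then nothing prevents $A$ from containing almost all of $S$. Your counting, $|N(u)\cap A|\ge d(u)-k$, only yields a neighbour of $u$ in $A\setminus S$ when $d(u)-k>|S|$, i.e.\ $\delta\ge k+2m-1$. That is the bound already implied by Theorem~\ref{thm:pathbound}. The lower bound $|A|\ge\delta-k+1$ does not help, since $u\in A\setminus S$ anyway. What is needed is a neighbour of $u$ in $(A\cup T)\setminus S$ whose own witness contradicts minimality.

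The missing idea is to minimize $|A|$ only over \emph{good} witnesses, namely those with $|A\cap S|\le\lfloor w/2\rfloor$, where $w=|S|$; one orientation of every witness is good. One then proves that $u$ has no neighbour in $(A\cup T)\setminus S$. Goodness need not pass to the witness $(u',u,T',A',B')$ of the new edge, and this is exactly what Lemma~\ref{lem:uncrossing} handles. In its notation, first exclude $|Q_{21}|\le k-2$, which forces $d(u)\le k$.
\begin{itemize}
\item If $|Q_{11}|\ge k$, then $B$ is disjoint from $A'\cup B'$ and $|A|>|B|$. Minimality forces $|B\cap S|>\lfloor w/2\rfloor$, so both orientations of the new witness are good, while one of $|A'|,|B'|$ is smaller than $|A|$. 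This contradicts minimality.
\item If $u'\in T$, one gets $|T\cap A'|\ge\lceil (w+1)/4\rceil+1$, which caps the number of neighbours of $u$ in $T$ and gives $d(u)\le k+\lfloor w/2\rfloor-\lceil (w+1)/4\rceil$.
\end{itemize}
Together with the final count $d(u)\le\min\{k+\lfloor w/2\rfloor,\,w+1\}$, the contradiction requires $d(u)\ge\max\{k+\lceil (w+1)/4\rceil,\min\{k+\lfloor w/2\rfloor+1,\,w+2\}\}$. For $w=2m-2$ this is exactly $\max\{k+\lceil m/2\rceil,\min\{k+m,2m\}\}$. So the $\lceil m/2\rceil$ comes from halving $S$ and from this analysis of neighbours in $T$, not from adding edges in pairs.
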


Our result recovers Theorem~\ref{thm:halin} for $m=1$. 
The bound in Theorem~\ref{thm:main:1} strengthens all previously known bounds in \cite{lzfm} for $m\ge 3$ and $k\ge 4$ (see Figure~\ref{fig:comparison}). It also improves Theorem~\ref{thm:lzfm_delta} by replacing the assumptions $k\ge4$ and $\delta(G)\ge3k-1$ with the single condition $\delta(G)\ge2k+1$.
Consequently, Conjecture~\ref{conj:lzfm} holds whenever $\delta(G)\ge 2k+1$.
 
\begin{figure}[h!]
\centering
\begin{tikzpicture}[scale=1.05, x=0.52cm,y=0.48cm,>=stealth]

\draw[->,thick] (0,6) -- (13.3,6) node[right] {$m$};
\draw[->,thick] (0,6) -- (0,21.0) node[above] {$\delta$};
\draw[thick] (-0.15,6.35) -- (0.15,6.62);   
\draw[thick] (-0.15,6.55) -- (0.15,6.82);

\draw[dashed,gray!55] (1.6,6) -- (1.6,7);
\draw[dashed,gray!55] (4,6) -- (4,8);
\draw[dashed,gray!55] (6,6) -- (6,12);
\draw[dashed,gray!55] (9,6) -- (9,17);
\draw[dashed,gray!55] (0,7) -- (1,7);
\draw[dashed,gray!55] (0,8) -- (4,8);
\draw[dashed,gray!55] (0,12) -- (6,12);
\draw[dashed,gray!55] (0,17) -- (1,17);

\draw[very thick,black,dashed] (1,7) -- (7.75,20.5);
\node[black,font=\footnotesize,rotate=61,anchor=center] at (5.4,15.0)
      {Theorem~\ref{thm:pathbound}};

\draw[very thick,dashed,black] (1,17) -- (9,17) -- (10.75,20.5);
\node[black,font=\footnotesize,anchor=south west] at (1.15,17.05)
      {Theorem~\ref{thm:lzfm_delta}};
\node[black,font=\scriptsize,anchor=north] at (2.75,16.9) {($k\ge 4$)};
\node[black,font=\scriptsize,rotate=61,anchor=south] at (9.75,18.45) {slope $2$};

\draw[<->,gray!65] (4,8.3) -- (4,12.7);
\node[gray!75,font=\scriptsize,anchor=east] at (4.2,9.5) {$k-1$};
\draw[<->,gray!65] (9,15.25) -- (9,16.75);
\node[gray!75,font=\scriptsize,anchor=east] at (9.2,16.0) {$\tfrac{k}{2}-1$};

\draw[very thick,blue!65!black] (1,7) -- (1.6,7) -- (4,8) -- (6,12);   
\draw[very thick,blue!65!black] (6,12) -- (12.4,18.4);                 
\fill[blue!65!black] (1,7) circle (2pt) (1.6,7) circle (1.8pt) (4,8) circle (1.6pt)
                     (6,12) circle (2pt);
\node[blue!65!black,font=\footnotesize,anchor=north west] at (6.15,11.75)
      {Theorem~\ref{thm:main:1}};
\node[blue!65!black,font=\scriptsize,rotate=43,anchor=north] at (11.55,17.35) {slope $1$};

\node[below,font=\footnotesize] at (1,5.85)   {$1$};
\node[below,font=\footnotesize] at (1.6,5.85) {$2$};
\node[below,font=\footnotesize] at (4,5.85)   {$\tfrac{2k}{3}$};
\node[below,font=\footnotesize] at (6,5.85)   {$k$};
\node[below,font=\footnotesize] at (9,5.85)   {$\tfrac{3k}{2}$};

\node[left,font=\footnotesize] at (0,7)    {$k+1$};
\node[left,font=\footnotesize] at (0,8.1)  {$\tfrac{4k}{3}$};
\node[left,font=\footnotesize] at (0,12)   {$2k$};
\node[left,font=\footnotesize] at (0,17)   {$3k-1$};
\end{tikzpicture}
\caption{A schematic illustration of our main result, Theorem~\ref{thm:main:1} (blue line), alongside the earlier known bounds (thick dashed lines) on minimum degree forcing a $k$-removable $m$-matching, for fixed $k$, where floors and ceilings are suppressed.}
\label{fig:comparison}
\end{figure}

\begin{corollary}\label{cor1}
For every $k\ge1$, every $k$-connected graph $G$ with $\delta(G)\ge 2k+1$ contains a
$k$-removable $\bigl\lceil(\delta(G)+1)/2\bigr\rceil$-matching, unless $\delta(G)$ is even and $G\cong K_{\delta(G)+1}$.
\end{corollary}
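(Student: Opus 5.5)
We derive Corollary~\ref{cor1} directly from Theorem~\ref{thm:main:1}. Write $\delta=\delta(G)$ and $m=\lceil(\delta+1)/2\rceil$, so that $2m=\delta+1$ if $\delta$ is odd and $2m=\delta+2$ if $\delta$ is even. The plan is to check the degree hypothesis of Theorem~\ref{thm:main:1} and the order hypothesis $|V(G)|\ge 2m$. The only exception will arise from the order condition.

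\emph{Degree condition.} Since $\delta\ge 2k+1$, we have $m\ge\lceil(2k+2)/2\rceil=k+1>k$. Hence we are in the case $k<m$, and we must verify $\delta\ge k+m$, that is, $\delta-\lceil(\delta+1)/2\rceil\ge k$. This says $\lfloor(\delta-1)/2\rfloor\ge k$, which holds exactly when $\delta\ge 2k+1$. So the hypothesis $\delta(G)\ge 2k+1$ is precisely what is needed, and this explains the threshold in the corollary.

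\emph{Order condition.} Always $|V(G)|\ge\delta+1$.
\begin{itemize}
\item If $\delta$ is odd, then $2m=\delta+1\le|V(G)|$, and Theorem~\ref{thm:main:1} applies.
\item If $\delta$ is even, we need $|V(G)|\ge\delta+2$. This fails only when $|V(G)|=\delta+1$. A graph on $\delta+1$ vertices with minimum degree $\delta$ is $K_{\delta+1}$, which is the excluded case.
\end{itemize}
Otherwise $|V(G)|\ge\delta+2=2m$, and Theorem~\ref{thm:main:1} gives a $k$-removable $m$-matching.

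We would also remark that the exception is genuine: $K_{\delta+1}$ with $\delta$ even has odd order $\delta+1<2m$, so it has no $m$-matching at all. The argument is pure arithmetic, and the only point needing care is the parity split in the order condition.
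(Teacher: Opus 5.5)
Your proposal is correct and is the same argument as the paper's: check $m>k$ and $\delta\ge k+m$, then check $|V(G)|\ge 2m$, with $K_{\delta+1}$ for even $\delta$ as the only failure. Your explicit parity split is slightly more careful than the paper's one-line phrasing. That phrasing asserts $|V(G)|\ge\delta+2$ outside the exceptional case, which overlooks $K_{\delta+1}$ with $\delta$ odd. In that case your observation $2m=\delta+1\le |V(G)|$ is what actually applies.
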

\begin{proof}
Let $m=\bigl\lceil(\delta(G)+1)/2\bigr\rceil$. Since $\delta(G)\ge 2k+1$, we have $m>k$ and $\delta(G)\ge k+m$. Moreover, $|V(G)|\ge \delta(G)+2\ge 2m$, unless $\delta(G)$ is even and $G\cong K_{\delta(G)+1}$. Thus, Theorem~\ref{thm:main:1} yields a $k$-removable $m$-matching.
\end{proof}

The paper is organized as follows. In Section~\ref{sec:witness}, we introduce the notion of a \emph{witness} and analyze how two witnesses can overlap.
In Section~\ref{sec:key:lemma}, we prove a version of Theorem~\ref{thm:halin}  that guarantees a $k$-removable edge avoiding a prescribed vertex set. Finally, in Section~\ref{sec:proof}, we apply these results to prove Theorem~\ref{thm:main:1}. Section~\ref{sec:concluding} gives some remarks.

\section{Witnesses in $k$-connected graphs}\label{sec:witness}

Let $k$ be a positive integer, let $H$ be a $k$-connected graph, and let $xy$ be a non-$k$-removable edge of $H$.
Here, an edge is called \emph{non-$k$-removable} if its deletion does not preserve $k$-connectedness.
Hence there is a vertex cut $S$ of $H-xy$ of size at most $k-1$. 
Since $H$ is $k$-connected, $S$ separates $x$ and $y$ in $H-xy$.
If $X$ and $Y$ are the components of $(H-xy)-S$ containing $x$ and $y$, respectively, 
then we call the ordered $5$-tuple $(x,y,S,X,Y)$ a \emph{witness} for $xy$.

\begin{observation}\label{obs:witness}
    Let $H$ be a $k$-connected graph. 
    For a non-$k$-removable edge $xy$ of $H$, let $(x,y,S,X,Y)$ be a witness for $xy$. 
    Then $|S|=k-1$, $xy$ is a cut edge of $H-S$, and $(H-xy)-S$ has exactly two components $X$ and $Y$.
\end{observation}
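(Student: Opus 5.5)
The plan is to derive all three assertions directly from the definition of a witness, using only that $H$ is $k$-connected while $H-xy$ has a vertex cut $S$ with $|S|\le k-1$ (which, as noted above, separates $x$ from $y$ in $H-xy$).

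\textbf{Step 1: $|S|=k-1$.} Suppose $|S|\le k-2$. Since $H$ is $k$-connected, $H-S$ is $2$-connected, because $|S|+1\le k-1$ vertices can be removed from $H$ without disconnecting it. A $2$-connected graph has no cut edge; this needs $H-S$ to have at least three vertices, which holds since $|V(H)|\ge k+1$. So $(H-S)-xy=(H-xy)-S$ is connected, contradicting the assumption that $S$ separates $x$ and $y$ in $H-xy$. Hence $|S|=k-1$.

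\textbf{Step 2: $xy$ is a cut edge of $H-S$.} Since $|S|=k-1<k$, the graph $H-S$ is connected. Removing $xy$ from it disconnects $x$ from $y$, because $X$ and $Y$ are distinct components of $(H-xy)-S$. So $xy$ is a cut edge of $H-S$.

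\textbf{Step 3: exactly two components.} Deleting a single edge from a connected graph produces at most two components, and here it produces at least two since $X\ne Y$. Hence $(H-xy)-S$ has exactly the two components $X$ and $Y$.

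The only point requiring care is Step 1, where we must confirm that $H-S$ is $2$-connected and has enough vertices for the no-cut-edge argument to apply. The standard convention that a $k$-connected graph has more than $k$ vertices supplies this.
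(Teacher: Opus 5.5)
Your proof is correct and is essentially the paper's argument. The paper first gets the cut-edge and two-component statements from the connectivity of $H-S$, then proves $|S|\ge k-1$ by a case split: either $X=\{x\}$ and $Y=\{y\}$ (so $|S|=|V(H)|-2\ge k-1$), or $S\cup\{x\}$ or $S\cup\{y\}$ is a vertex cut of $H$. Your appeal to ``$H-S$ is $2$-connected on at least three vertices, hence has no cut edge'' packages exactly these two cases (the vertex count, and the fact that an endpoint of a bridge with a nontrivial side is a cut vertex) into one standard fact, with the steps taken in a different order.
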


\begin{proof}
Since $H-S$ is connected whereas $(H-xy)-S=(H-S)-xy$ is not connected, $xy$ is a cut edge of $H-S$ joining $X$ and $Y$.
So, $(H-xy)-S$ has exactly two components $X$ and $Y$. 
 If $X=\{x\}$ and $Y=\{y\}$, then $|S|=|V(H)|-2\ge k-1$ since a $k$-connected graph has at least $k+1$ vertices. 
Otherwise, $|X|\ge2$ or $|Y|\ge2$, and then $S\cup\{x\}$ or $S\cup\{y\}$, respectively, is a vertex cut of $H$.
Hence $|S|\ge k-1$ in either case.
\end{proof}

For a witness $(x,y,S,X,Y)$ for $xy$, the edge $xy$ is the only edge of $H-S$ joining $X$ and $Y$.
Moreover, $N_H(x)\subseteq X \cup S \cup \{y\}$ and $N_H(z)\subseteq X \cup S$ for every $z \in X$ distinct from $x$, where these symmetrically hold for $y$ and $Y$.
The following lemma describes a structure when two witnesses overlap in a $k$-connected graph.

\begin{lemma}\label{lem:uncrossing}
Let $H$ be a $k$-connected graph for a positive integer $k$.
Suppose that $(u,v,T,A,B)$ and $(u',u,T',A',B')$ are witnesses for two distinct non-$k$-removable edges $uv,\,u'u\in E(H)$. 
Define
\[
  T_1=T\cap A',\quad T_2=T\cap B',\quad T'_1=T'\cap A,\quad T'_2=T'\cap B,
\]
and, for $i,j\in\{1,2\}$, set $Q_{ij}=(T\cap T')\cup T_i\cup T'_j$ (see
Figure~\ref{fig:crossing-witnesses}). 
Then the following hold.
\begin{itemize}
\item[(i)] If $|Q_{21}|\le k-2$, then $d_H(u)\le k$.
\item[(ii)] If $|Q_{21}|\ge k-1$, then $B\cap A'=\emptyset$. In addition, if $|Q_{11}|\ge k$, then $B\cap B'=\emptyset$, $|A|>|B|$, and $|A|>|A'|$ or $|A|>|B'|$.
\end{itemize}
\end{lemma}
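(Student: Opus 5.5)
The plan is to work in the $3\times3$ grid obtained by intersecting the partition $V(H)=A\cup T\cup B$ with $V(H)=A'\cup T'\cup B'$. Throughout I will use the neighbourhood facts recorded after Observation~\ref{obs:witness}, namely $N_H(z)\subseteq A\cup T$ for $z\in A\setminus\{u\}$ and $N_H(u)\subseteq A\cup T\cup\{v\}$, together with their analogues for $B,A',B'$.

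\textbf{Locating the ends.} We have $u\in A\cap B'$. Since $v\in N_H(u)\subseteq B'\cup T'\cup\{u'\}$ and $v\neq u'$, we get $v\notin A'$. Symmetrically, $u'\in A'$ and $u'\in A\cup T$, so $u'\notin B$.

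\textbf{Size identity.} $T$ is the disjoint union of $T\cap T'$, $T_1$ and $T_2$, and likewise for $T'$. Hence
\[|Q_{12}|+|Q_{21}|=|Q_{11}|+|Q_{22}|=|T|+|T'|=2k-2,\]
using $|T|=|T'|=k-1$ from Observation~\ref{obs:witness}.

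\textbf{Part (i).} Every $z\in(A\cap B')\setminus\{u\}$ has
\[N_H(z)\subseteq (A\cup T)\cap(B'\cup T')=(A\cap B')\cup Q_{21}.\]
If $(A\cap B')\setminus\{u\}\neq\emptyset$, then $Q_{21}\cup\{u\}$ separates it from $v\in B$. This is a cut of size at most $k-1$, which is impossible. So $A\cap B'=\{u\}$. Then $N_H(u)\subseteq Q_{21}\cup\{u',v\}$, so $d_H(u)\le |Q_{21}|+2\le k$.

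\textbf{Part (ii), first claim.} If $|Q_{21}|\ge k-1$, the size identity gives $|Q_{12}|\le k-1$. The set $B\cap A'$ contains neither $u$, $v$ nor $u'$, so all its neighbours lie in
\[(B\cup T)\cap(A'\cup T')\setminus(B\cap A')=Q_{12}.\]
If $B\cap A'$ were nonempty, $Q_{12}$ would separate it from $u$, a contradiction. Hence $B\cap A'=\emptyset$.

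\textbf{Part (ii), second claim.} If moreover $|Q_{11}|\ge k$, then $|Q_{22}|\le k-2$. Repeating the argument of (i) with $v$ in place of $u$ gives $B\cap B'=\emptyset$: either $Q_{22}\cup\{v\}$ cuts $(B\cap B')\setminus\{v\}$ off from $u$, or $B\cap B'=\{v\}$ and $d_H(v)\le|Q_{22}|+1\le k-1<\delta(H)$. Consequently $B=T'_2$, $B'=(A\cap B')\cup T_2$ and $A'=(A\cap A')\cup T_1$.

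\textbf{Reducing the last claim.} We have $|A|+|B|=|V(H)|-k+1=|A'|+|B'|$. So if $|A|\le|A'|$ and $|A|\le|B'|$, then $2|A|\le|A|+|B|$, i.e.\ $|A|\le|B|$. Thus "$|A|>|A'|$ or $|A|>|B'|$" follows from $|A|>|B|$.

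\textbf{Main obstacle: $|A|>|B|=|T'_2|$.} This is the step I expect to be hardest. We know $A\supseteq T'_1\cup\{u\}$. From $|Q_{11}|\ge k$ and $|T|=k-1$ we get $|T'_1|\ge|T_2|+1$, but this compares $T'_1$ with $T_2$ rather than with $T'_2$. So I will look for a second inequality bounding $|T'_2|$.

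First I will try degree counting inside $B$. Every $w\in B\setminus\{v\}$ has $N_H(w)\subseteq(B\setminus\{w\})\cup T$, which gives $|B|\ge2$ whenever $B\ne\{v\}$. Combined with the fact that $B=T'_2$ lies entirely inside the $(k-1)$-set $T'$, this should bound $|B|$ by $k-1-|T\cap T'|-|T'_1|$.

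Failing that, I will apply the separator argument to $A\cap A'$ using $Q_{11}$, to show $A\cap A'$ is large enough that $|A|=|A\cap A'|+|T'_1|+1$ exceeds $|T'_2|$.
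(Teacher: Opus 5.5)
\textbf{Gap: the inequality $|A|>|B|$ is never proved.} Everything up to that point is correct: the location of $u,v,u'$, part (i), $B\cap A'=\emptyset$, $B\cap B'=\emptyset$, and the reduction of ``$|A|>|A'|$ or $|A|>|B'|$'' to $|A|>|B|$. Your proof of $B\cap B'=\emptyset$, via $Q_{22}\cup\{v\}$ plus the degree bound $d_H(v)\le|Q_{22}|+1$, is a valid variant of the paper's one-step argument that $Q_{22}\cup\{u\}$ separates $B\cap B'$ from $u'$.

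But the step you call the main obstacle is left open, and neither fallback closes it.
\begin{itemize}
\item[(a)] Degree counting inside $B$ only gives a \emph{lower} bound on $|B|$ (such as $|B|\ge 2$), which is the wrong direction.
\item[(b)] The expression $|B|=k-1-|T\cap T'|-|T'_1|$ is just the identity $|T'|=k-1$. Combined with $|A|\ge|T'_1|+1$, it gives $|A|>|B|$ only if $2|T'_1|+|T\cap T'|\ge k-1$, which you have not established.
\item[(c)] A separator argument on $A\cap A'$ using $Q_{11}$ yields neither a contradiction nor a useful lower bound on $|A\cap A'|$, precisely because $|Q_{11}|\ge k$.
\end{itemize}

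\textbf{The missing idea is to compare $Q_{11}$ with $Q_{12}$, not with $T$.} You already derived $|Q_{12}|\le k-1$ from $|Q_{21}|\ge k-1$, so $|Q_{11}|\ge k>|Q_{12}|$. The sets $Q_{11}$ and $Q_{12}$ are disjoint unions with the common part $(T\cap T')\cup T_1$; they differ only in $T'_1$ versus $T'_2$. Hence $|T'_1|>|T'_2|=|B|$. Since $u\in A\setminus T'$, you get $|A|\ge|T'_1|+1>|B|$. This is exactly how the paper finishes, and with this line your reduction completes the proof.
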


\begin{figure}[ht]
\centering
\begin{tikzpicture}[scale=0.88, every node/.style={font=\small}]
\definecolor{cutblue}{RGB}{222,235,247}
\definecolor{cutred}{RGB}{252,228,214}
\definecolor{overlappurple}{RGB}{232,222,245}
\definecolor{qmagenta}{RGB}{214,39,120}
\definecolor{qgreen}{RGB}{0,158,60}
\fill[cutblue] (3.6,1.0) rectangle (5.1,5.5);
\fill[cutred]  (0,2.6) rectangle (8.7,3.9);
\fill[overlappurple] (3.6,2.6) rectangle (5.1,3.9);
\draw[thick] (0,1.0) rectangle (8.7,5.5);
\draw[thick] (3.6,1.0) -- (3.6,5.5);
\draw[thick] (5.1,1.0) -- (5.1,5.5);
\draw[thick] (0,2.6) -- (8.7,2.6);
\draw[thick] (0,3.9) -- (8.7,3.9);
\node at (1.8,5.85) {$A$};
\node at (4.35,5.85) {$T$};
\node at (6.9,5.85) {$B$};
\node at (-0.55,4.7)  {$A'$};
\node at (-0.55,3.25) {$T'$};
\node at (-0.55,1.8)  {$B'$};
\node[circle,fill=black,inner sep=1.6pt,label=right:$u'$] (up) at (2.75,4.35) {};
\node[circle,fill=black,inner sep=1.6pt,label=left:$u$]  (u)  at (2.75,2.05) {};
\node[circle,fill=black,inner sep=1.6pt,label=right:$v$] (v)  at (5.65,3.1) {};
\draw[very thick] (u) -- (v);
\draw[very thick] (up) -- (u);
\node[qmagenta] at (2.0,3.25) {$T'_1$};
\node[qmagenta] at (6.7,3.25) {$T'_2$};
\node[qgreen]   at (4.35,4.9) {$T_1$};
\node[qgreen]   at (4.35,1.6) {$T_2$};
\node[align=left,anchor=west] at (9.15,4.6)
    {$Q_{11}=(T\cap T')\cup$\textcolor{qgreen}{$T_1$}$\cup$ \textcolor{qmagenta}{$T'_1$}};
\node[align=left,anchor=west] at (9.15,3.5)
  {$Q_{22}=(T\cap T')\cup$\textcolor{qgreen}{$T_2$}$\cup$ \textcolor{qmagenta}{$T'_2$}};
\node[align=left,anchor=west] at (9.15,2.4)
 {$Q_{12}=(T\cap T')\cup$\textcolor{qgreen}{$T_1$}$\cup$ \textcolor{qmagenta}{$T'_2$}};
\node[align=left,anchor=west] at (9.15,1.3)
  {$Q_{21}=(T\cap T')\cup$\textcolor{qgreen}{$T_2$}$\cup$ \textcolor{qmagenta}{$T'_1$}};
\end{tikzpicture}
\caption{The two witnesses $(u,v,T,A,B)$ and $(u',u,T',A',B')$. The vertical
strip is the vertex cut $T$ in $H-uv$, and the horizontal strip is the vertex cut
$T'$ in $H-u'u$. The vertex $u'$ lies in $A'$ and outside $B$; it may lie in $A$
or in $T$.}
\label{fig:crossing-witnesses}
\end{figure}

\begin{proof}
By the definition of a witness, $uv$ (resp.\ $u'u$) is the only edge of $H-T$
(resp.\ $H-T'$) joining $A$ and $B$ (resp.\ $A'$ and $B'$), and therefore $u' \not\in B$ (resp.\  $v \not\in A'$).

We first consider the case where $|Q_{21}|\le k-2$.
Then $|Q_{21}\cup\{u\}| \le k-1$.
If $(A\cap B')\setminus\{u\}\ne\emptyset$, then $Q_{21}\cup\{u\}$ separates
$(A\cap B')\setminus\{u\}$ from $v$, which contradicts the $k$-connectedness of $H$.
Thus $A\cap B'=\{u\}$, so $N_H(u) \subseteq Q_{21}\cup\{u',v\}$.
Therefore $d_H(u)\le k$ and (i) holds.

Now, we consider the case where $|Q_{21}|\ge k-1$.
Note that
\[ |Q_{11}|+|Q_{22}|=|Q_{12}|+|Q_{21}|=|T|+|T'| = 2k-2.\]
Thus $|Q_{12}|\le k-1$.
If $B\cap A'\ne\emptyset$, then, since $v \not\in A'$, $Q_{12}$ is a vertex cut of $H$, which is impossible.
Therefore $B\cap A'=\emptyset$.
Assume that $|Q_{11}|\ge k$.
Then $|Q_{22}|\le k-2$, so $|Q_{22}\cup\{u\}|\le k-1$.
If $B\cap B'\ne\emptyset$, then $Q_{22}\cup\{u\}$ separates $B\cap B'$ from $u'$, which is impossible.
Thus $B\cap B'=\emptyset$, so $v\in T'$. 
Moreover, $B=T_2'$. 
Since $|Q_{11}|\ge k>|Q_{12}|$, we have $|T'_1|>|T'_2|$, so $|A|\ge |T'_1|>|T'_2| = |B|$.
Then $2|A|>|A|+|B|=|V(H)|-|T|=|V(H)|-|T'|=|A'|+|B'|$ by Observation~\ref{obs:witness},  so $|A|>|A'|$ or $|A|>|B'|$.
Thus (ii) holds. 
\end{proof}

\section{$k$-removable edge avoiding a prescribed set}\label{sec:key:lemma}

The following theorem produces, in a $k$-connected graph, a $k$-removable edge avoiding a prescribed vertex set $W$ provided that $H-W$ has an edge. It plays a central role in the proof of Theorem~\ref{thm:main:1}.

\begin{theorem}\label{lem:key}
Let $H$ be a $k$-connected graph for a positive integer $k$, and let $W$ be a possibly empty subset of $V(H)$. Set $w=|W|$.
Suppose that $E(H-W)\neq\emptyset$.
Then $H$ has a $k$-removable edge with both ends in $V(H)\setminus W$ if $d_H(x)\ge h(k,w)$ for every $x\in V(H)\setminus W$, where
\[
    h(k,w):= \max \Big\{ k+\Bigl\lceil\tfrac{w+1}{4}\Bigr\rceil,           \min \Big\{ k+\Bigl\lceil\tfrac{w+1}{2}\Bigr\rceil,w+2            \Big\} \Big\} .
\]
\end{theorem}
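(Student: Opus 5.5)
We argue by contradiction using an extremal choice of witness. Suppose every edge of $H-W$ is non-$k$-removable. Over all edges $uv$ with $u,v\notin W$ and all witnesses for them, choose a witness $(u,v,T,A,B)$ with $|B|$ minimum; after swapping the names of $u$ and $v$ we may take the side $B$ containing $v$ to be the smaller one. The aim is to find, inside $B\cup\{v\}$ or near it, another edge $u'u$ or $vv'$ of $H-W$ whose witness, combined with $(u,v,T,A,B)$ through Lemma~\ref{lem:uncrossing}, yields either a vertex of degree at most $k$ outside $W$ or a witness with a strictly smaller side. Both outcomes contradict the hypotheses or the choice of the witness.

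\textbf{Step 1: find a second edge at $v$.} First we show that $v$ has a neighbour $v'\notin W$ with $v'\ne u$. We have $N_H(v)\subseteq B\cup T\cup\{u\}$, and $d_H(v)\ge h(k,w)$. If every such neighbour lay in $W$, then $|N_H(v)\cap W|\ge h(k,w)-1$. Now count: $|T|=k-1$, and the vertices of $B$ other than $v$ have all their neighbours in $B\cup T$, so the degree bound $h(k,w)$ forces $B$ to be large unless $B=\{v\}$.

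\textbf{Step 2: apply Lemma~\ref{lem:uncrossing} with roles $(v,u,\dots)$ and $(v',v,\dots)$.} Take a witness $(v',v,T',A',B')$ for $vv'$ and apply Lemma~\ref{lem:uncrossing} with $v$ playing the role of the shared vertex. Case (i) gives $d_H(v)\le k$, which contradicts $h(k,w)\ge k+1$. In case (ii), the conclusions $B\cap A'=\emptyset$ and $B\cap B'=\emptyset$, together with $|A|>|B|$ and the size comparisons, should produce a witness whose small side lies strictly inside $B$. This contradicts the minimality of $|B|$. The subcase $|Q_{21}|\ge k-1$ but $|Q_{11}|\le k-1$ still has to be handled. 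There the symmetry $|Q_{11}|+|Q_{22}|=2k-2$ lets us replace $T'$ by $Q_{22}$ or $Q_{12}$ and obtain a smaller witness side contained in $B\cap A'$ or $B\cap B'$.

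\textbf{Step 3: the degenerate case where $B$ is tiny or saturated by $W$.} This step is the main obstacle and is where the two terms of $h(k,w)$ come from. If all candidate edges inside $B\cup T$ avoiding $W$ are used up, then every vertex of $B\setminus W$ has at least $h(k,w)-(k-1)$ neighbours in $B$. Thus $B$ carries a dense subgraph, and its $W$-part has to absorb most of the degree.

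\begin{itemize}
\item The term $w+2$ handles the case where $B$ is a near-clique: a vertex of $B\setminus W$ with degree at least $w+2$ must see two non-$W$ vertices. This gives two edges of $H-W$ at a common vertex, to which the uncrossing applies.
\item The term $k+\lceil (w+1)/2\rceil$ handles the case where $W$ is split between the two sides. Pigeonholing gives a side, $A$ or $B$, containing at most $\lceil w/2\rceil$ vertices of $W$, so a vertex there has at least $k$ neighbours outside $W$.
\item The outer term $k+\lceil (w+1)/4\rceil$ should come from running this halving twice: once for the cut $T$ and once for $T'$, splitting the vertex set into four quadrants. One quadrant then contains at most about $w/4$ vertices of $W$.
\end{itemize}

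I would first establish the four-quadrant counting bound, since it is the step that forces $h$. I expect the delicate part to be checking that the quadrant with few $W$-vertices is nonempty and that it yields a strictly smaller witness.
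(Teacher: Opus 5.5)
\textbf{There is a genuine gap: your extremal choice ignores $W$.} You minimise $|B|$ over \emph{all} witnesses of edges of $H-W$. This gives no control over $|B\cap W|$, and every later step needs that control.

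Step~1 is not actually proved. The vertices of $B\setminus\{v\}$ may lie in $W$, so they are not covered by the degree hypothesis. Whenever $h(k,w)\le w+1$ (e.g.\ $w\ge 2k-1$), nothing stops $v$ from having all neighbours other than $u$ in $W\cap(B\cup T)$. ``$B$ is large'' is not a contradiction.

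Even granting a second edge $vv'$, the counting does not reach the stated bound. The final step gives only $d_H(v)\le |(B\cup T)\cap W|+1\le w+1$. In the subcase $v'\in T$, $A'\subseteq T$, it gives $d_H(v)+d_H(v')\le 2k+|B\cap W|$. With $|B\cap W|$ possibly as large as $w$, your counting closes only when $h\ge\max\{w+2,\,k+\lceil (w+1)/2\rceil\}$. That replaces the inner $\min$ of $h(k,w)$ by a $\max$.

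In Step~2 you also have not relabelled the lemma. With $v$ as the shared vertex, Lemma~\ref{lem:uncrossing}(ii) gives $A\cap A'=A\cap B'=\emptyset$ and $|B|>|A|$. Under your unrestricted minimality that is an immediate contradiction; this case becomes delicate only once $W$ enters the choice. Finally, the ``four-quadrant'' account of the term $k+\lceil (w+1)/4\rceil$ is speculative, and it is not where that term comes from.

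\textbf{The missing idea is to build $W$ into the extremal choice.} By pigeonhole, each witness $(x,y,S,X,Y)$ of an edge of $H-W$ satisfies $|W\cap X|\le\lfloor w/2\rfloor$ or $|W\cap Y|\le\lfloor w/2\rfloor$. Call it \emph{good} if the former holds. Choose a good witness $(u,v,T,A,B)$ with $|A|$ minimum, and work at $u$ rather than $v$. It suffices to show that $u$ has no neighbour in $(A\cup T)\setminus W$, since then $d_H(u)\le\min\{k+\lfloor w/2\rfloor,\,w+1\}<h(k,w)$.

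Take such a neighbour $u'$, in $A$ if possible, with witness $(u',u,T',A',B')$.
\begin{itemize}
\item If $|Q_{21}|\le k-2$, then $d_H(u)\le k$, a contradiction. So $|Q_{21}|\ge k-1$, and Lemma~\ref{lem:uncrossing}(ii) gives $B\cap A'=\emptyset$.
\item If $|Q_{11}|\ge k$, the lemma gives $B\cap B'=\emptyset$ and $|A|>|B|$. This no longer contradicts minimality by itself. It does show $(v,u,T,B,A)$ is not good, so $|W\cap(A\cup T)|\le\lfloor w/2\rfloor$. Since $A',B'\subseteq A\cup T$, both orientations of the new witness are good. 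This contradicts ``$|A|>|A'|$ or $|A|>|B'|$''.
\item If $|Q_{11}|\le k-1$ and $A\cap A'\neq\emptyset$, then $Q_{11}$ yields either a smaller good witness inside $A\cap A'$ (when $u'\in A$) or a cut of $H$ of size less than $k$ (when $u'\in T$).
\item Otherwise $A\cap A'=\emptyset$, so $u'\in T$ and $A'\subseteq T$. Then $|A'|\ge d_H(u')-(k-1)$, while $N_H(u)\subseteq (A\cap W)\cup(T\setminus A')\cup\{u',v\}$. Hence $d_H(u)+d_H(u')\le 2k+\lfloor w/2\rfloor$.
\end{itemize}
This trade-off between the degrees of $u$ and $u'$ is what produces the term $k+\lceil (w+1)/4\rceil$.
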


Before proving Theorem~\ref{lem:key}, we observe that $h(k,w)$ is non-decreasing in  $w$ (see Figure~\ref{fig:graph:h}), since 
\[
h(k,w)=
\begin{cases}
 k+\Bigl\lceil\tfrac{w+1}{4}\Bigr\rceil
  & \text{if } w \le  \Big\lfloor\tfrac   {4(k-2)}3 \Big\rfloor ,\\[1.2ex]
w+2
 & \text{if } \Big\lfloor\tfrac   {4(k-2)}3 \Big\rfloor    <  w \le 2k-4,\\[1.2ex]
k+\Bigl\lceil\tfrac{w+1}{2}\Bigr\rceil &\text{if }w \ge 2k-3.
\end{cases}
\]

 \begin{figure}[ht]
\centering
\begin{tikzpicture}[x=0.62cm,y=0.62cm,>=stealth]
\def\xa{4}   
\def\xb{6}   
\draw[->,thick] (-0.4,0) -- (14.4,0) node[right] {$w$};
\draw[->,thick] (0,-0.4) -- (0,9.0) node[above] {};
\draw[dashed,gray!60] (\xa,0) -- (\xa,2.5);
\draw[dashed,gray!60] (\xb,0) -- (\xb,4.5);
\draw[very thick,blue!65!black]
      (0,1.5) -- (4,2.5) -- (6,4.5) -- (13,8.0);
\fill[blue!65!black] (0,1.5) circle (1.7pt) (4,2.5) circle (1.7pt)
                     (6,4.5) circle (1.7pt);
\node[above=1pt,font=\small,blue!65!black] at (2,2.0)    {slope $\tfrac14$};
\node[above left=1pt and -2pt,font=\small,blue!65!black] at (5,3.5) {slope $1$};
\node[above=1pt,font=\small,blue!65!black] at (9.5,6.5) {slope $\tfrac12$};
\node[below right=2pt and -12pt,font=\footnotesize] at (2,2.0)
      {$h(k,w)=k+\bigl\lceil\tfrac{w+1}{4}\bigr\rceil$};
\node[below right=2pt and 0pt,font=\footnotesize] at (4.4,3.4) {$h(k,w)=w+2$};
\node[below right=2pt and -14pt,font=\footnotesize] at (9.5,6.25)
      {$h(k,w)=k+\bigl\lceil\tfrac{w+1}{2}\bigr\rceil$};
\node[below,font=\footnotesize] at (\xa-0.1,-0.15) {$\bigl\lfloor\tfrac{4(k-2)}{3}\bigr\rfloor$};
\node[below,font=\footnotesize] at (\xb+0.3,-0.15) {$2k-4$};
\node[left,font=\small] at (0,1.5) {$k+1$};
\end{tikzpicture}
\caption{A schematic illustration of $h(k,w)$ for fixed $k$, where floors and ceilings are suppressed.}
\label{fig:graph:h}
\end{figure}

\begin{proof}
Let $U=V(H)\setminus W$. Then $E(H[U])\neq\emptyset$ by assumption.
Suppose to the contrary that no edge of $H[U]$ is
$k$-removable in $H$. 
For every $xy\in E(H[U])$ and its witness $(x,y,S,X,Y)$, we have
$|W\cap X|+|W\cap Y|\le|W|\le w$, so $X$ or $Y$ meets $W$ in at most
$\lfloor w/2\rfloor$ vertices.  
A witness $(x,y,S,X,Y)$ for $xy$ is called {\em good} if \[   |W\cap X|\ \le\ \Bigl\lfloor\tfrac w2\Bigr\rfloor.\]
Among all good witnesses for edges in $H[U]$, we choose $(u,v,T,A,B)$ with $|A|$ minimum. Note that $u,v\in U$ and $|T|=k-1$. 
\begin{claim}\label{clm:no_neighbor_ATU}
$u$ has no neighbor in $(A\cup T)\cap U$.
\end{claim}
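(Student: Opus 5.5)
Suppose, for contradiction, that $u$ has a neighbor $u'\in(A\cup T)\cap U$. Then $u'u$ is an edge of $H[U]$, and it is not $k$-removable because no edge of $H[U]$ is. Since $u'\ne v$, it is distinct from $uv$. We fix a witness for it, oriented so that $u$ is the second vertex: $(u',u,T',A',B')$, with $u'\in A'$ and $u\in B'$. This is exactly the setting of Lemma~\ref{lem:uncrossing}. We then split according to the size of $Q_{21}$ and derive a contradiction in every case, either with the degree hypothesis or with the minimality of $|A|$.

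\textbf{Case $|Q_{21}|\le k-2$.} Lemma~\ref{lem:uncrossing}(i) gives $d_H(u)\le k$. But $u\in U$, so $d_H(u)\ge h(k,w)\ge k+1$. This is a contradiction.

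\textbf{Case $|Q_{21}|\ge k-1$.} Lemma~\ref{lem:uncrossing}(ii) gives $B\cap A'=\emptyset$. The goal is to produce a good witness whose large side has fewer than $|A|$ vertices. Since $u'\in A\cup T$ and $u'\in A'$, the set $A'$ lies inside $A\cup T$, and $A'$ misses $v$ because $v\notin A'$.

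If $|Q_{11}|\ge k$, Lemma~\ref{lem:uncrossing}(ii) also gives $B\cap B'=\emptyset$ and $|A|>|A'|$ or $|A|>|B'|$. We then use whichever of $(u',u,T',A',B')$ or its reversal $(u,u',T',B',A')$ has first component smaller than $|A|$, and we need that component to meet $W$ in at most $\lfloor w/2\rfloor$ vertices. This is where $W$ must be tracked. Here $B'\subseteq A\cup T$ and $A'\subseteq A\cup T$, so both sides of the new witness lie in $A\cup T$, while $B\subseteq T'$. I would try to bound $|W\cap A'|$ and $|W\cap B'|$ by $|W\cap A|+|W\cap T|$, or alternatively reselect which side is "good". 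I expect that in the worst case one needs the degree bound to rule out the bad configuration. Concretely, $u$ or $u'$ would have all its neighbors in a small set, of size at most $|T\cup T'|$ plus a portion of $W$, which contradicts $d_H\ge h(k,w)$.

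If $|Q_{11}|\le k-1$, then $|Q_{22}|\ge k-1$. I would argue by symmetry that $A\cap A'$, which contains $u'$ when $u'\in A$, is separated from the rest by $Q_{11}$ together with $u$. Hence $A'\cap A$ is small, and $(u',u,\dots)$ yields a witness whose first side is a proper subset of $A$. The case $u'\in T$ needs its own handling: then $u'\in T_1$, and I would count the neighbors of $u'$ inside $T\cup T'\cup\{u\}$ plus $W$.

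\textbf{Main obstacle.} The hardest step is ensuring that the smaller witness found is good, i.e.\ controlling $|W\cap A'|$ (or $|W\cap B'|$). This is exactly where the thresholds $w/4$ and $w/2$ in $h(k,w)$ should enter. I would first try to show that the new small side is contained in $A\setminus\{u\}$, so that it inherits $|W\cap A|\le\lfloor w/2\rfloor$. Only when that fails would I fall back on a degree count for $u'$ using $h(k,w)$.
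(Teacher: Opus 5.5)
\textbf{There is a genuine gap.} Your framework matches the paper's, and several pieces are correct. You rule out $|Q_{21}|\le k-2$ via Lemma~\ref{lem:uncrossing}(i) and get $B\cap A'=\emptyset$ from (ii). For $u'\in A$ with $|Q_{11}|\le k-1$, you use $Q_{11}$ to get a witness for $u'u$ whose first side lies in $A\cap A'\subseteq A\setminus\{u\}$, so it is good and smaller. But the two steps you flag as hard are left unproved, and your proposed fallback (a degree count) is not what closes the first one.

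\textbf{The case $|Q_{11}|\ge k$.} The missing idea is that Lemma~\ref{lem:uncrossing}(ii) also gives $|A|>|B|$. So the reversed witness $(v,u,T,B,A)$ has a smaller first side, and by minimality of $|A|$ it is not good. Hence $|W\cap B|\ge\lfloor w/2\rfloor+1$, that is, $|W\cap(A\cup T)|\le\lfloor w/2\rfloor$. Since $B\cap A'=B\cap B'=\emptyset$, both $A'$ and $B'$ lie in $A\cup T$. So both $(u',u,T',A',B')$ and $(u,u',T',B',A')$ are good, and minimality forces $|A|\le|A'|$ and $|A|\le|B'|$, contradicting (ii). No degree hypothesis is needed here.

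\textbf{The case $u'\in T$.} First, if $|Q_{11}|\le k-1$ and $A\cap A'\neq\emptyset$, then $Q_{11}$ is a vertex cut of $H$, since it separates $A\cap A'\not\ni u'$ from $u$. So $A\cap A'=\emptyset$ and $A'=T_1$. The contradiction then comes from the degree of $u$, not of $u'$. It also needs a choice you did not make: take $u'\in A\cap U$ whenever possible (equivalently, first exclude neighbors in $A\cap U$). Then, when $u'\in T$, you have $N_H(u)\cap A\subseteq A\cap W$.

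The argument runs as follows. From $N_H(u')\subseteq (T_1\setminus\{u'\})\cup T'\cup\{u\}$ and $d_H(u')\ge k+\lceil (w+1)/4\rceil$ you get $|T_1|\ge\lceil(w+1)/4\rceil+1$. Because $u'u$ is the only $A'$--$B'$ edge of $H-T'$, $u$ has no neighbor in $T_1\setminus\{u'\}$. Therefore $N_H(u)\subseteq (A\cap W)\cup(T\setminus T_1)\cup\{u',v\}$, giving $d_H(u)\le k+\lfloor w/2\rfloor-\lceil (w+1)/4\rceil<k+\lceil(w+1)/4\rceil\le h(k,w)$.

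This two-step count is the only place the $w/4$ threshold enters the claim. Your expectation that it is used to control $|W\cap A'|$ or $|W\cap B'|$ is misplaced: that control comes purely from minimality, as above.
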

\begin{proof} Suppose that $u$ has a neighbor in $(A\cup T) \cap U$.
If $u$ has a neighbor in $A\cap U$, then choose a neighbor $u'$ of $u$ in $A\cap U$.
Otherwise, let $u'$ be a neighbor of $u$ in $T\cap U$.
As $u'u\in E(H[U])$ is not $k$-removable, there is a witness $(u',u,T',A',B')$ for $u'u$. 
We adopt the notation in Lemma~\ref{lem:uncrossing}.
We first show that $B\cap A'=A\cap A'=\emptyset$, and so $u'\in T_1$, as shown in  Figure~\ref{fig:crossing-witnesses2}.

\begin{figure}[ht]
\centering
\begin{tikzpicture}[scale=0.88, every node/.style={font=\small}]
\definecolor{cutblue}{RGB}{222,235,247}
\definecolor{cutred}{RGB}{252,228,214}
\definecolor{overlappurple}{RGB}{232,222,245}
\definecolor{qmagenta}{RGB}{214,39,120}
\definecolor{qgreen}{RGB}{0,158,60}
\fill[cutblue] (3.6,1.0) rectangle (5.1,5.5);
\fill[cutred]  (0,2.6) rectangle (8.7,3.9);
\fill[overlappurple] (3.6,2.6) rectangle (5.1,3.9);
\draw[thick] (0,1.0) rectangle (8.7,5.5);
\draw[thick] (3.6,1.0) -- (3.6,5.5);
\draw[thick] (5.1,1.0) -- (5.1,5.5);
\draw[thick] (0,2.6) -- (8.7,2.6);
\draw[thick] (0,3.9) -- (8.7,3.9);
\node at (1.8,5.85) {$A$};
\node at (4.35,5.85) {$T$};
\node at (6.9,5.85) {$B$};
\node at (-0.55,4.7)  {$A'$};
\node at (-0.55,3.25) {$T'$};
\node at (-0.55,1.8)  {$B'$};
\node at (1.8,4.7) {$\emptyset$};
\node at (6.9,4.7) {$\emptyset$};
\node[circle,fill=black,inner sep=1.6pt,label=right:$u'$] (up) at (4.35,4.35) {};
\node[circle,fill=black,inner sep=1.6pt,label=left:$u$]  (u)  at (2.75,1.8) {};
\draw[very thick] (up) -- (u);
\node[qmagenta] at (1.8,3.25) {$T'_1$};
\node[qmagenta] at (6.9,3.25) {$T'_2$};
\node[qgreen]   at (4.35,5) {$T_1$};
\node[qgreen]   at (4.35,1.8)   {$T_2$};
\node[align=left,anchor=west] at (9.15,4.6)
    {$Q_{11}=(T\cap T')\cup$\textcolor{qgreen}{$T_1$}$\cup$ \textcolor{qmagenta}{$T'_1$}};
\node[align=left,anchor=west] at (9.15,3.5)
  {$Q_{22}=(T\cap T')\cup$\textcolor{qgreen}{$T_2$}$\cup$ \textcolor{qmagenta}{$T'_2$}};
\node[align=left,anchor=west] at (9.15,2.4)
 {$Q_{12}=(T\cap T')\cup$\textcolor{qgreen}{$T_1$}$\cup$ \textcolor{qmagenta}{$T'_2$}};
\node[align=left,anchor=west] at (9.15,1.3)
  {$Q_{21}=(T\cap T')\cup$\textcolor{qgreen}{$T_2$}$\cup$ \textcolor{qmagenta}{$T'_1$}};
\end{tikzpicture}
\caption{The two witnesses $(u,v,T,A,B)$ and $(u',u,T',A',B')$ in the proof of Claim~\ref{clm:no_neighbor_ATU}.}
\label{fig:crossing-witnesses2}
\end{figure}

If $|Q_{21}|\le k-2$, then $A\cap B'=\{u\}$ since  otherwise $Q_{21}\cup\{u\}$ would be a vertex cut of size at most $k-1$ 
separating $v$ from $(A\cap B') -\{u\}$. However, this also yields a contradiction since $d_H(u)\le |Q_{21}\cup\{u',v\}|\le k$.
Thus $|Q_{21}|\ge k-1$.
By Lemma~\ref{lem:uncrossing}(ii), $B\cap A'=\emptyset$.

Suppose to the contrary that $A\cap A'\neq \emptyset$. Suppose $|Q_{11}|\ge k$.
By Lemma~\ref{lem:uncrossing}(ii) again, $B\cap B'=\emptyset$ and $|A|>|B|$.
Then the witness $(v,u,T,B,A)$ satisfies $|B|<|A|$, so by the minimality of $|A|$ it is not a good witness, that is, $|W\cap B|\ge\lfloor w/2\rfloor+1$. Hence
\[
   |W\cap(A\cup T)|=|W|-|W\cap B|
   \le w-\Bigl(\Bigl\lfloor\tfrac w2\Bigr\rfloor+1\Bigr)
   \le \Bigl\lfloor\tfrac w2\Bigr\rfloor .
\]
Since $B\cap A'=B\cap B'=\emptyset$, we have $|W\cap A'|\le\lfloor w/2\rfloor$ and $|W\cap B'|\le\lfloor w/2\rfloor$. 
Thus both $(u',u,T',A',B')$ and $(u,u',T',B',A')$ are good.
Then the minimality of $|A|$ yields $|A|\le|A'|$ and $|A|\le|B'|$, contradicting Lemma~\ref{lem:uncrossing}(ii).
Hence $|Q_{11}|\le k-1$.
If $u'\in A$, $Q_{11}$ gives a good witness for $u'u$, which contradicts the minimality of $|A|$. 
If $u' \not\in A$, then $Q_{11}$ is a vertex cut of $H$, which contradicts the $k$-connectedness of $H$.
Therefore $A\cap A'=\emptyset$, which implies that $u' \in T$.

Since $A'=T_1$ and $d_H(u')\ge h(k,w)\ge k+\lceil (w+1)/4\rceil$, 
it follows that $N_H(u')\subseteq (T_1\setminus\{u'\})\cup T'\cup\{u\}$, and we obtain 
\[
   |T_1|\ge |N_H(u')\cap T_1|+1\ \ge\ d_H(u')-|T'|\ \ge\ \Bigl\lceil \tfrac{w+1}{4}\Bigr\rceil+1.
\]
Since $u' \in T$, it follows that $u$ has no neighbor in $A\cap U$ by the choice of $u'$.
Note that $uu'$ is the only edge joining $A'$ and $B'$ in $H-T'$.
Thus no vertex in $T_1-\{u'\}$ is adjacent to $u$, so 
\[
   N_H(u)\ \subseteq\ (A\cap W) \cup \bigl(T\setminus T_1 \bigr) \cup \{u', v\} .
\]
Hence, since $(u,v,T,A,B)$ is good and $|T|=k-1$,
\[
   d_H(u)\ \le\ \Bigl\lfloor\tfrac w2\Bigr\rfloor
              +\Bigl((k-1)-\Bigl\lceil \tfrac{w+1}{4}\Bigr\rceil-1 \Bigr)+2
          \ =\ k+\Bigl\lfloor\tfrac w2\Bigr\rfloor-\Bigl\lceil \tfrac{w+1}{4}\Bigr\rceil < k+\Bigl\lceil \tfrac{w+1}{4}\Bigr\rceil \le h(k,w),
\]
a contradiction. 
\end{proof}

Since $uv$ is a cut edge of $H-T$, the claim above gives
\[   N_H(u)\ \subseteq\ (A\cap W) \cup (T\cap W) \cup \{v\}.\]
We bound $d_H(u)$ in two ways.
First, since $A$, $T$, and $\{v\}$ are pairwise disjoint, 
\[
   d_H(u)\ \le\ |A\cap W|+|T|+1\ \le\ \Bigl\lfloor\tfrac w2\Bigr\rfloor+(k-1)+1
         \ =\ k+\Bigl\lfloor\tfrac w2\Bigr\rfloor .
\]
Second, since $(A\cap W)\cup(T\cap W)\subseteq W$, $d_H(u)  \le  |W|+1  \le  w+1$.
Hence
$d_H(u)\le\min\bigl\{\,k+\lfloor w/2\rfloor,\ w+1\,\bigr\} < h(k,w)$, a contradiction.  
\end{proof}

\section{Proof of Theorem~\ref{thm:main:1}}\label{sec:proof} 
In this section, we prove Theorem~\ref{thm:main:1}.
We need the following simple observation. 

\begin{observation}\label{lem:k_con_preserv}
Let $G$ be a $k$-connected graph for a positive integer $k$.
If $G'$ is the graph obtained from $G$ by adding a new vertex joined to at least $k$ vertices of $G$, then $G'$ is $k$-connected.
\end{observation}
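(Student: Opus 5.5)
The plan is to verify $k$-connectivity of $G'$ directly from the definition, using Menger's theorem or a direct vertex-cut argument. Let $z$ be the new vertex and $N=N_{G'}(z)\subseteq V(G)$ with $|N|\ge k$. First, $|V(G')|=|V(G)|+1\ge k+2>k$, so the order requirement is satisfied. It therefore suffices to show that for every $S\subseteq V(G')$ with $|S|\le k-1$, the graph $G'-S$ is connected.

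Fix such an $S$ and split into two cases according to whether $z\in S$.

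\emph{Case $z\in S$.} Then $G'-S=G-(S\setminus\{z\})$. Since $|S\setminus\{z\}|\le k-2$ and $G$ is $k$-connected, this graph is connected.

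\emph{Case $z\notin S$.} Then $G'-S$ consists of $G-S$ together with the vertex $z$. The graph $G-S$ is connected because $|S\cap V(G)|\le k-1$. Moreover, $z$ still has a neighbor in $G-S$, since $|N\setminus S|\ge k-(k-1)=1$. Hence $G'-S$ is connected.

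In both cases $G'-S$ is connected, so $G'$ is $k$-connected. There is no genuine obstacle here; the only point needing care is to check the vertex-count condition in the definition of $k$-connectivity, which was handled at the start.
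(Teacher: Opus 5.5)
Your proof is correct and complete: the order check $|V(G')|\ge k+2$ and the case split on whether the new vertex lies in $S$ (using $|N\setminus S|\ge k-(k-1)=1$ when it does not) are exactly what is needed. The paper calls this a simple observation and gives no proof, so your direct vertex-cut argument supplies the standard verification that the paper leaves implicit.
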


\begin{proof}[Proof of Theorem~\ref{thm:main:1}]
Let $G$ be a $k$-connected graph with at least $2m$ vertices and \[
   \delta(G)\ \ge\
   \begin{cases}
      \max\bigl\{k+\bigl\lceil\tfrac m2\bigr\rceil,\ 2m\bigr\}
         & \text{if } k\ge m,\\[2pt]
      k+m & \text{if } k<m.
   \end{cases}
\]
Among all $k$-removable matchings of $G$, we choose one of maximum size, say $M$, and set $s=|M|$ and $W=V(M)$. Within this proof, we regard the empty set as a $k$-removable matching. Hence such a maximum matching $M$ is well-defined.
In particular, if $G$ has no $k$-removable edges, then $M=W=\emptyset$ and $s=0$. 
It suffices to show $s\ge m$, so suppose to the contrary that $s\le m-1$. 
Let $H=G-M$ and $U=V(G)\setminus W$. 
Then $H$ is $k$-connected and $|W|=2s$.

\begin{claim}\label{clm:good}
$M$ can be chosen so that $E(H-W)\neq\emptyset$.
\end{claim}

\begin{proof}
Note that $H-W=H[U]=G[U]$.
Suppose to the contrary that $G[U]$ is edgeless.
Since $|V(G)| \ge 2m$ and $|W|\le 2m-2$, $U$ has two distinct vertices $u$ and $v$.
As $U$ is independent, $N_G(u),N_G(v)\subseteq W$. 
In particular, $\delta(G) \le d_G(u) \le 2m-2$.
By the minimum degree condition of $G$, $m>k$ and $\delta(G)\ge k+m$. 

If $|N_G(u)\cap\{a,b\}|+|N_G(v)\cap \{a,b\}| \le 2$ for each edge $ab \in M$, then, since $N_G(u),N_G(v)\subseteq W$,
\[2(k+m)\le 2\delta(G)\le d_G(u)+d_G(v)\le 2|M| \le 2m-2,\]
a contradiction.
Thus there exists an edge $xy \in M$ such that $|N_G(u)\cap\{x,y\}|+|N_G(v)\cap \{x,y\}| \ge 3$.
Hence, by symmetry, we may assume $ux, yv \in E(G)$.

Note that $H$ is $k$-connected, $|V(H)|\ge k+m+1>k+1$, and 
\[
d_{H}(z)\ge
\begin{cases}
k+m-1,& \text{if } z\in W,\\
k+m,& \text{if } z\in U.
\end{cases}
\]
We will show that  $H-u$ is $k$-connected.
Suppose to the contrary that $H-u$ is not $k$-connected. Since $H-u$ has at least $k+1$ vertices, there is a vertex cut $S$ of $H$ such that $|S|=k$ and $u\in S$.
Choose a component $A$ of $H-S$ such that $|A \cap W|$ is minimum.
Since $H-S$ has at least two components and $|W| \le 2m-2$, we have $|A \cap W| \le m-1$.
If $A\cap U$ has a vertex $z$, then, since $U$ is independent, $N_H(z) \subseteq S \cup (A \cap W)$, so $d_H(z)\le k+m-1$, a contradiction.
Thus $A\cap U=\emptyset$ and so $A \subseteq W$.
Consequently, for any $z\in A$, $d_H(z)\le |S|+|A|-1 \le k+m-2$, a contradiction.
Therefore $H-u$ is $k$-connected.

Since $H-u$ is $k$-connected and $d_{H-ux}(u) \ge d_H(u)-1 \ge k+m-1 \ge k$, Observation~\ref{lem:k_con_preserv} implies that $H-ux$ is $k$-connected.
Hence, the graph $H'$ obtained from $H-ux$ by adding the edge $xy$  is also $k$-connected.
Now, let $M'=(M-xy)\cup \{ux\}$. 
Since $u \not\in W$, $M'$ is an $s$-matching and $G-M'=H'$. 
Thus $M'$ is $k$-removable, and hence is also of maximum size among $k$-removable matchings of $G$.
Moreover, $V(M')=(W\setminus\{y\})\cup\{u\}$ and $yv \in \bigl(G-M'\bigr)-V(M')$. 
Replacing $M$ by $M'$ yields the desired choice of $M$.
\end{proof}

By~Claim~\ref{clm:good}, $E(H[U])\neq \emptyset$.
We apply Theorem~\ref{lem:key} to $H$ and $W$ with $w=2s$. For every $x\in U$, we
have $d_H(x)=d_G(x)\ge\delta(G)$, and since $h(k,w)$ is non-decreasing in $w$ and
$2s\le 2m-2$,
\[
   h(k,2s)\ \le\ h(k,2m-2)
   \ =\ \max\Bigl\{\,k+\Bigl\lceil\tfrac m2\Bigr\rceil,\
   \min\{k+m,\ 2m\}\,\Bigr\}\ \le\ \delta(G),
\]
where the last inequality holds since if $k\ge m$, both terms are at most $\max\{k+\lceil m/2\rceil,\,2m\}$, and otherwise, both terms are at most $k+m$.
Hence Theorem~\ref{lem:key} yields a $k$-removable edge $e$ of $H$ with both ends
in $U$. Then $M\cup\{ e \}$ is a $k$-removable $(s+1)$-matching in $G$,
which contradicts the maximality of $M$.
\end{proof}

\section{Concluding remarks}\label{sec:concluding}
We revisit our main theorem and compare it with known results. 
Recall that 
$f(k,\delta)$ is the largest integer $m$ such that every $k$-connected graph $G$ with $|V(G)|\ge 2\delta$ and $\delta(G)\ge \delta$ has a $k$-removable $m$-matching. 
Theorem~\ref{thm:main:1} bounds the function $f(k,\delta)$.
As observed in~\cite{lzfm},  $f(k,\delta) \le \delta$ and the upper bound is  attained by the graph $K_{\delta,\,n-\delta}$. 

\begin{corollary}\label{cor:f-bound}
For all integers $\delta > k \ge 1$,
\[
f(k,\delta) \;\ge\;
\begin{cases}
\ \delta-k & \text{if } \delta \ge 2k+1,\\[4pt]
\ \min\bigl\{\lfloor \delta/2 \rfloor,\; 2(\delta-k)\bigr\} & \text{if } k+1 \le \delta \le 2k.
\end{cases}
\]
\end{corollary}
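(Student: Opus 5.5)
The corollary follows directly from Theorem~\ref{thm:main:1}. We fix $\delta>k\ge1$ and a $k$-connected graph $G$ with $|V(G)|\ge 2\delta$ and $\delta(G)\ge\delta$. For each range of $\delta$ we exhibit an $m$ equal to the claimed lower bound, and check two things: the hypotheses of Theorem~\ref{thm:main:1} hold for that $m$, and $|V(G)|\ge 2m$. The vertex condition is automatic whenever $m\le\delta$, which holds in every case below because $|V(G)|\ge2\delta$. We may also assume $m\ge1$, which holds since $\delta>k$.

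\emph{Case $\delta\ge 2k+1$.} Set $m=\delta-k$. Then $m\ge k+1>k$, so the relevant condition is $\delta(G)\ge k+m=\delta$, which holds. Hence $G$ has a $k$-removable $(\delta-k)$-matching.

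\emph{Case $k+1\le\delta\le 2k$.} Set $m=\min\{\lfloor\delta/2\rfloor,\,2(\delta-k)\}$. Here $\delta-k\ge1$ gives $m\ge1$. Also $m\le\delta/2\le k$, so the relevant condition is $\delta(G)\ge\max\{k+\lceil m/2\rceil,\,2m\}$, and we verify both terms:
\begin{itemize}
\item[(a)] Since $m\le\lfloor\delta/2\rfloor$, we get $2m\le\delta$.
\item[(b)] Since $m\le 2(\delta-k)$, we get $\lceil m/2\rceil\le\delta-k$, because $\delta-k$ is an integer. Thus $k+\lceil m/2\rceil\le\delta$.
\end{itemize}
So Theorem~\ref{thm:main:1} applies and gives a $k$-removable $m$-matching.

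Since $G$ was arbitrary, $f(k,\delta)\ge m$ in each case. The only delicate point is the integrality used in (b), namely $\lceil m/2\rceil\le\delta-k$ whenever $m\le2(\delta-k)$. There is no real obstacle beyond this bookkeeping.
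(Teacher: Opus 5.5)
Your proposal is correct and follows essentially the same approach as the paper. You take $m$ to be the claimed lower bound, note $m\le\delta$ to get $|V(G)|\ge 2m$, and check the hypotheses of Theorem~\ref{thm:main:1} in each range, using the integrality step $\lceil m/2\rceil\le\delta-k$. You also make explicit that $m\le k$ in the second range, which the paper leaves implicit.
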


\begin{proof}
Let $G$ be a $k$-connected graph with $|V(G)| \ge 2\delta$ and $\delta(G) \ge \delta$,
and let $m$ be the right-hand side. Then $m \le \delta$, so $|V(G)| \ge 2\delta \ge 2m$.
If $\delta \ge 2k+1$, then $m = \delta-k > k$ and $k+m = \delta$. If $\delta \le 2k$,
then $\max\{k+\lceil m/2 \rceil,\, 2m\} \le \delta$. 
Theorem~\ref{thm:main:1} yields a $k$-removable $m$-matching.
\end{proof}

Table~\ref{tab:f} summarizes the resulting bounds. 
For $k\ge 4$, Corollary~5.1 improves the bounds in~\cite{lzfm}
when $\delta\ge k+2$, and matches their bound when $\delta=k+1$.
\begin{table}[H]
\centering
\begin{tabular}{c|c||c|c|c}
\hline
$k$ & $\delta$ & lower bound & upper bound & source of lower bound \\
\hline
\hline
\multirow{2}{*}{$1$} & $2$ & $1$ & $1$ & \cite{halin} \\
  & $\ge 3$ & $\delta$ & $\delta$ & \cite{lzfm} \\
\hline
\multirow{3}{*}{$2$} & $3$ & $2$ & $2$ & \cite{lzfm} \\
 & $4$ & $3$  & $4$ & \cite{lzfm} \\
 & $\ge 5$ & $\delta-2$ & $\delta$ & Cor.~\ref{cor:f-bound}; \cite{lzfm} if $\delta=5$ or even \\
\hline
\multirow{3}{*}{$3$} & $4$ & $2$ & $3$ & Cor.~\ref{cor:f-bound};\cite{lzfm} \\
 & $5,6$ & $\lceil (\delta+1)/2 \rceil$ & $\delta$ & \cite{lzfm} \\
 & $\ge 7$ & $\delta-3$ & $\delta$ & Cor.~\ref{cor:f-bound};  \cite{lzfm} if $\delta \in \{7,8\}$ \\
\hline
\multirow{3}{*}{$\ge 4$} & $k+1$ & $2$ & $k$ & Cor.~\ref{cor:f-bound}; \cite{lzfm} \\
 & $k+2 \le \delta \le 2k$ & $\min\{\lfloor \delta/2 \rfloor,\, 2(\delta-k)\}$ & $\delta$ & Cor.~\ref{cor:f-bound} \\
 & $\ge 2k+1$ & $\delta-k$ & $\delta$ & Cor.~\ref{cor:f-bound} \\
\hline
\end{tabular}
\caption{Best known bounds on $f(k,\delta)$ for $\delta > k \ge 1$. Both upper bounds
$f(k,\delta) \le \delta$ and $f(k,k+1) \le k$ are from~\cite{lzfm}.}
\label{tab:f}
\end{table}

\section*{Acknowledgements}
Hojin Chu was supported by a KIAS Individual Grant (CG101801) at Korea Institute
for Advanced Study.
Ringi Kim was supported by the National Research Foundation of Korea (NRF) grant
funded by the Korea government (MSIT) (No.~RS-2025-00561867), and supported by
INHA UNIVERSITY Research Grant.
Boram Park was supported by the National Research Foundation of Korea (NRF) grant
funded by the Korea government (MSIT) (No.~RS-2025-00523206), and supported by the
New Faculty Startup Fund from Seoul National University.

\end{document}